\title{Efficient bound preserving and asymptotic preserving semi-implicit schemes for the fast reaction--diffusion system}
\author{Yu Zhao\thanks{Corresponding author.}~\thanks{School of Mathematics, Statistics and Mechanics, Beijing University of Technology, Beijing, 100124, P.R.China. \\  
	E-mail: y.zhao@bjut.edu.cn}~, 
	Zhennan Zhou\thanks{Institute for Theoretical Sciences, Westlake University, Hangzhou, Zhejiang Province, 310030, P.R.China. \\
		E-mail: zhouzhennan@westlake.edu.cn} }
\date{\today}
\newtheorem{theorem}{Theorem}[section]
\newtheorem{lemma}{Lemma}[section]
\begin{document}
	\maketitle
\begin{abstract}
	We consider a special type of fast reaction--diffusion systems in which the coefficients of the reaction terms of the two substances are much larger than those of the diffusion terms while the diffusive motion to the substrate is negligible. 
	Specifically speaking, the rate constants of the reaction terms are $O(1/\epsilon)$ while the diffusion coefficients are $O(1)$ where the parameter $\epsilon$ is small.
	When the rate constants of the reaction terms become highly large, i.e. $\epsilon$ tends to 0, the singular limit behavior of such a fast reaction--diffusion system is inscribed by the Stefan problem with latent heat, which brings great challenges in numerical simulations. 
	In this paper, we adopt a semi-implicit scheme, which is first-order accurate in time and can accurately approximate the interface propagation even when the reaction becomes extremely fast, that is to say, the parameter $\epsilon$ is sufficiently small.  
	The scheme satisfies the positivity, bound preserving properties and has $L^2$ stability and the linearized stability results of the system. 
	For better performance on numerical simulations, we then construct a semi-implicit Runge--Kutta scheme which is second-order accurate in time. 
	Numerous numerical tests are carried out to demonstrate the properties, such as the order of accuracy, positivity and bound preserving, the capturing of the sharp interface with various $\epsilon$ and to simulate the dynamics of the substances and the substrate, and to explore the heat transfer process, such as solid melting or liquid solidification in two dimensions. 
\end{abstract}

{\small 
	{\bf Keywords.} 
	Fast reaction, Stefan problem, multi-scale time scales, semi-implicit method.
	
	{\bf AMS subject classification.} 
	35Q92, 65M06, 65M12, 92E20.  	
}

\section{Introduction}

The reaction--diffusion equations are a class of partial differential equations that describe the dynamics of a system in which some quantity, such as the concentration of a chemical, diffuses through space while undergoing some kind of reaction including a chemical reaction. 
Such equations have a wide range of promising applications to describe phenomena arising from natural and social sciences, such as the population dynamics \cite{kareiva1990population}, the chemical reactions \cite{johnson2011original,tyson2013belousov}, the neuronal responses \cite{hodgkin1952quantitative,izhikevich2006fitzhugh} and the enterprise competition \cite{wangersky1978lotka}.
The fast reaction--diffusion system is a special case of reaction--diffusion systems in which the strength of the reaction term of chemical substances is much greater than that of the diffusion term.
It has been widely applied to describe cancer, wound healing, and the regeneration of tissues and bones, and has aroused great research interest in the fields of medicine and biology  \cite{mcfadden1988mathematical}. 
For instance, as for the penetration of radiolabeling antibodies into tumor tissues, the penetration rate of antibodies is very slow 
due to their large size, while antibodies attach to antigens and react quickly, so the whole process can be described by a fast reaction--diffusion system \cite{hilhorst1996fast}. 

In this work, we shall be concerned with a special type of fast reaction--diffusion systems, in which there are two reactants and one substrate. 
In such systems, the two reactants' dynamics in concentration are determined by both the regular diffusion and fast reactions, while the evolution of the substrate's concentration is governed only by its fast reaction.
The rate constants of the reactions are $O(1/\epsilon)$ and the diffusion coefficients are $O(1)$ where the parameter $\epsilon$ is sufficiently small. 
Specifically, we consider the following semi-linear equations: 
\begin{align}
\partial_t u_\epsilon - d_1 \Delta u_\epsilon &= -\frac{1}{\epsilon} u_\epsilon \left[v_\epsilon + \lambda \left(1 - p_\epsilon\right)\right], ~~x \in \mathbb{R}^d, ~~t>0,  \label{eq:u} \\
\partial_t v_\epsilon - d_2 \Delta v_\epsilon &= -\frac{1}{\epsilon} v_\epsilon \left(u_\epsilon + \lambda p_\epsilon\right), \label{eq:v} \\
\partial_t p_\epsilon &= \frac{1}{\epsilon} \left[\left(1 - p_\epsilon\right) u_\epsilon - v_\epsilon p_\epsilon\right], \label{eq:p}
\end{align}
where $u_\epsilon = u_\epsilon(x, t)$ and $v_\epsilon = v_\epsilon(x, t)$ represent the concentrations of the two reactants and $p_\epsilon = p_\epsilon(x, t)$  represents the concentration of the substrate with $x \in \mathbb{R}^d$ indicating the location and $t > 0$ indicating the time \cite{hilhorst2003vanishing,perthame2015parabolic}.
$\lambda > 0$ is a constant, and $d_1, d_2 >0$ are the diffusion coefficients of the two reactants. 
The initial condition of the equations \eqref{eq:u}-\eqref{eq:p} is  
\begin{align}
u_\epsilon(x, 0) = u_\epsilon^0(x), \quad v_\epsilon(x, 0) = v_\epsilon^0(x), \quad p_\epsilon(x, 0) = p_\epsilon^0(x), \label{i.c.}
\end{align}
where $u_\epsilon^0(x), v_\epsilon^0(x)$ and $p_\epsilon^0(x)$ are given.

Following \cite{perthame2015parabolic}, there are several main properties for the fast reaction--diffusion system \eqref{eq:u}-\eqref{i.c.}: non-negativity, upper bound preserving and limit behavior describing.
\begin{itemize}
\item {\bf Bound preserving.} \\
With the non-negative initial data satisfying uniform upper bounds in $\epsilon$, i.e.
\begin{align}
	0 \leq u_\epsilon^0(x) \leq \|u_\epsilon^0\|_{L^{\infty}(\mathbb{R}^d)}, \quad 0 \leq v_\epsilon^0(x) \leq \|v_\epsilon^0\|_{L^{\infty}(\mathbb{R}^d)}, \quad 0 \leq p_\epsilon^0(x) \leq 1, \label{i.c.bounds}
\end{align}
there is a weak solution of the system \eqref{eq:u}-\eqref{i.c.} satisfying for all $x$ and $t$
\begin{align}
	0 \leq u_\epsilon \leq \|u_\epsilon^0\|_{L^{\infty}(\mathbb{R}^d)}, \quad 0 \leq v_\epsilon \leq \|v_\epsilon^0\|_{L^{\infty}(\mathbb{R}^d)}, \quad 0 \leq p_\epsilon \leq 1. \nonumber
\end{align}
\item {\bf The limit behavior of the system \eqref{eq:u}-\eqref{i.c.} is described by the Stefan problem.} \\
Under the assumption \eqref{i.c.bounds}, one expects to find the strong limits 
$u = u(x, t) \geq 0, v = v(x, t) \geq 0$ and $0 \leq p = p(x, t) \leq 1$ of the solutions $u_\epsilon, v_\epsilon$ and $p_\epsilon$ as $\epsilon$ tends to $0$ in the system \eqref{eq:u}-\eqref{i.c.}.
This implies that the limits $u, v$ and $p$ satisfy
\begin{align}
	\begin{array}{ll}
		v = 0, ~~ p = 1, &\text{if~~} u > 0,  \\
		u = 0, ~~ p = 0, &\text{if~~} v > 0,  \\
		p \in [0, 1],  &\text{if~~} u = v = 0.
	\end{array}
	\label{limit_solution}
\end{align}
Let $w_\epsilon = u_\epsilon - v_\epsilon +  \lambda p_\epsilon$ and $w = u - v + \lambda p$, 
then the limit $w$ of $w_\epsilon$ solves the problem 
\begin{align}
	\partial_t w - \Delta B(w) = 0, ~~x \in \mathbb{R}^n, ~~ t>0, \label{eq:stefan}
\end{align}
where $B(\cdot)$ is a nonlinear diffusion function defined by
\begin{align}
	B(w) := \left\{
	\begin{array}{ll}
		d_2 w, &\text{if~~} w < 0, \\
		0, &\text{if~~} 0 \leq w \leq \lambda, \\
		d_1(w - \lambda), &\text{if~~} w > \lambda.
	\end{array}
	\right.
	\nonumber 
\end{align}
The equation \eqref{eq:stefan} is called the Stefan problem with $\lambda$ representing the latent heat. 
$w$ in the Stefan problem \eqref{eq:stefan} is called the enthalpy function of a substance, a thermodynamic state function that describes the total energy content of a system at a constant pressure, and $B(w)$ is referred to as the temperature of the substance.  
The expression \eqref{eq:stefan} describes the relationship between the temperature and enthalpy of the substance.
\end{itemize}

As the limit of the fast reaction--diffusion system \eqref{eq:u}-\eqref{i.c.} when $\epsilon$ tends to 0, the Stefan problem portrays the heat transfer process in the material undergoing a phase change. 
In the heat transfer process, the movement velocity of the interface between different phases of the substance is inscribed by the Stefan condition \cite{meirmanov2011stefan,fernandez1979generalized}.
The Stefan problem is named after the Austrian physicist Josef Stefan, who first proposed the problem in the 19th century \cite{stefan1871gleichgewicht}.  
And such a problem has wild applications in various fields, such as materials science, thermodynamics, and geophysics. 
It is commonly used to study the solidification and melting of metals, the freezing and thawing of soil, and the behavior of sea ice. 
Nowadays the Stefan problem has received great attentions in the ice accumulation problem on an aircraft and inspired by this, an enthalpy-based modeling framework to quantify uncertainty in Stefan problems with an injection boundary is developed in \cite{zhang2024uncertainty}.  

In terms of numerics, many efforts have been devoted to solving general reaction--diffusion equations and the Stefan problem respectively.
With respect to a reaction--diffusion equation, various numerical methods have been developed to approximate the solutions. 
Among them, the time-stepping methods mainly include explicit schemes, semi-implicit schemes \cite{madzvamuse2006time}, fully implicit schemes \cite{madzvamuse2014fully}, explicit-implicit schemes \cite{diele2017numerical,koto2008imex,huang2019high,wang2018third} and exponential time difference schemes \cite{khaliq2009smoothing,cox2002exponential,kassam2005fourth}, etc. 
Regarding the numerical discretization in space, the main focus is on the finite difference method, the finite element method \cite{madzvamuse2006time}, and the non-standard finite difference method \cite{anguelov2001contributions,mickens2020nonstandard}, etc. 
With respect to the Stefan problem, it is a challenging problem to solve it either analytically or numerically due to the moving interface, singularities and nonlinearities. 
Wherein accurately capturing the movement of the sharp interface between different phases has been the focus and main difficulty. 
The study of numerical methods for approximating the solution and capturing the sharp interface has attracted extensive attention from many researchers.
Depending on the different ways of tracking the sharp interface, numerical simulation methods of the Stefan problem are broadly categorized  into two directions.
One is to discretize the moving boundary problem directly based on the interface's movement velocity inscribed by the Stefan condition, such as fixed-grid finite difference methods \cite{kutluay1997numerical,caldwell2004numerical} and moving-grid finite difference methods \cite{kutluay1997numerical,beckett2001moving}. 
The other approach is to capture the interface's movement process implicitly by or based on enthalpy methods \cite{caldwell2004numerical,date1992novel,beckett2001moving,esen2004numerical,mackenzie2000numerical,tong2012smoothed,chen1997simple,zhang2024uncertainty}. 

It is a challenging problem to find a class of efficient numerical methods that can simulate both such a fast reaction--diffusion system \eqref{eq:u}-\eqref{i.c.} and its limit behavior which is described by the Stefan problem \eqref{eq:stefan} as $\epsilon$ tends to 0 in spite of the existing works in either direction. 
To be specific, the difficulty lies in several factors, including non-negativity, upper-bound preserving, stiffness, nonlinearity, parameter sensitivity, asymptotic preserving, etc.  
Therefore we aim to develop a class of efficient schemes for this type of fast reaction--diffusion system, which can not only overcome the stiffness and nonlinearity and have the non-negativity preserving and bound preserving properties on the numerical side based on the physical meaning and analytical properties of the continuum model, but also capture the limit behavior of the Stefan problem when $\epsilon$ in the numerical scheme tends to 0.

In this work, we will adopt a semi-implicit scheme, that is of first-order accuracy in time. 
Such a scheme not only satisfies the non-negativity and upper-bound preserving properties at the discrete level, but also has the stability results.
And it can accurately capture the interface propagation even with extremely fast reactions. 
We will also construct a semi-implicit Runge--Kutta scheme which is second-order accurate in time following the methodology presented in \cite{boscarino2016high}.
In addition, we perform numerical simulations of concentrations of the substances in chemical reactions, predictions of enthalpy function's behaviors in the Stefan problem and capture of sharp interfaces, thus showing strong promise for practical simulations of realistic scientific problems and providing much understanding of the nature of the mathematics behind the problem of aircraft ice accumulation. 

The rest of the paper is organized as follows. 
In section 2, we first consider the semi-implicit treatment to the fast reaction--diffusion system \eqref{eq:u}-\eqref{i.c.} which is of first-order accuracy in time, and prove that such a semi-discrete semi-implicit scheme satisfies the non-negativity preserving, bound preserving properties. 
Moreover, $L^2$ stability results of our scheme and the stability results of the corresponding scheme to the linearized system of \eqref{eq:u}-\eqref{i.c.} are given. 
Then error estimates of the scheme about the reactivity coefficients are provided.
At last, we construct a fully-discrete semi-implicit finite difference scheme.  
A semi-implicit Runge--Kutta scheme which is second-order accurate in time is outlined in section 3.
In section 4 we verify the properties of our numerical methods with numerous test examples, such as numerical accuracy, non-negativity and bound preserving, and the capturing of the sharp interface with various $\epsilon$.
Additionally, specified numerical tests are carried out with care to make simulations of the dynamics of the substances and substrate in chemical reactions and to demonstrate the processes of melting or solidification.
Some concluding remarks are made in section 5.

\section{Semi-implicit scheme and analysis} \label{sec: si}
In this section, to avoid the use of nonlinear solvers and improve the stability, we apply a semi-implicit strategy on the fast reaction--diffusion system \eqref{eq:u}-\eqref{i.c.} and propose the semi-implicit numerical schemes with the first-order convergence in time. 
And the numerical analysis of our scheme is presented in the following part.

\subsection{Semi-implicit treatment}
\label{subsec: semi}
To avoid solving nonlinear equations required by a fully implicit scheme, we focus on the semi-implicit strategy. 
When discretizing the equations \eqref{eq:u}-\eqref{eq:p} in time, to improve the stability, we treat the diffusion terms in equations \eqref{eq:u}-\eqref{eq:v} implicitly.
Then for the sake of computational simplicity and decoupling of solutions, $u$ in equation \eqref{eq:u}, $v$ in equation \eqref{eq:v}  and $p$ in equation \eqref{eq:p} are treated implicitly while the latest values are used for the other variables in each equation. 
Hence when discretizing the equations \eqref{eq:u}-\eqref{eq:p} in time, we adopt the following semi-implicit treatment to obtain the semi-discrete scheme
\begin{align}
	\frac{p^{n+1} - p^{n}}{\tau} &= \frac{1}{\epsilon} \left[\left(1 - p^{n+1}\right) u^n - v^n p^{n+1}\right], \label{semi:p} \\
	\frac{u^{n+1} - u^{n}}{\tau} - d_1 \Delta u^{n+1} &= -\frac{1}{\epsilon} u^{n+1} \left[v^n + \lambda \left(1 - p^{n+1}\right)\right],  \label{semi:u} \\
	\frac{v^{n+1} - v^{n}}{\tau} - d_2 \Delta v^{n+1} &= -\frac{1}{\epsilon} v^{n+1} \left(u^{n+1} + \lambda p^{n+1}\right), \label{semi:v} 
\end{align}
where $u^{n}, v^{n}, p^{n}$ are the approximation solutions to equations \eqref{eq:u}-\eqref{eq:p} at $t = t_n:= n \tau$ with $\tau > 0$ being the temporal size and $n = 0, 1, \cdots$.
Here in order to simplify the expression of the formulas, we drop the index $\epsilon$, and the same applies in the rest of the paper. 
We would like to highlight that such a semi-implicit treatment results in many numerical advantages, and we start by listing some desired properties of the semi-discrete scheme \eqref{semi:p}-\eqref{semi:v} in the following.
\begin{lemma}[Bound preserving]
	If the condition \eqref{i.c.bounds} holds for the initial data, the semi-discrete scheme \eqref{semi:p}-\eqref{semi:v} satisfies the non-negativity and upper-bound preserving properties.	
	\label{lemma:bp}
\end{lemma}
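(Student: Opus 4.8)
The plan is to proceed by induction on the time index $n$. Suppose that
\[
0 \le u^n \le \|u^0\|_{L^\infty}, \qquad 0 \le v^n \le \|v^0\|_{L^\infty}, \qquad 0 \le p^n \le 1,
\]
which for $n=0$ is precisely the hypothesis \eqref{i.c.bounds}; I aim to propagate the same bounds to step $n+1$. The structural feature that makes this work is that \eqref{semi:p}--\eqref{semi:v} is a \emph{triangular} (sequentially solvable) system: $p^{n+1}$ uses only $u^n,v^n,p^n$; then $u^{n+1}$ uses $u^n$ and the freshly computed $p^{n+1}$; and finally $v^{n+1}$ uses $v^n$, $u^{n+1}$ and $p^{n+1}$. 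The bounds must consequently be established in that order $p \to u \to v$.

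\textbf{Step 1 ($p$).} Equation \eqref{semi:p} is pointwise in $x$ and linear in $p^{n+1}$, so it can be solved explicitly:
\[
p^{n+1} = \frac{p^n + \tfrac{\tau}{\epsilon}\, u^n}{1 + \tfrac{\tau}{\epsilon}\,(u^n + v^n)}.
\]
Since $u^n,v^n,p^n \ge 0$, the numerator is nonnegative and the denominator is $\ge 1$, so $p^{n+1} \ge 0$; and $p^{n+1} \le 1$ is equivalent to $p^n \le 1 + \tfrac{\tau}{\epsilon} v^n$, which holds because $p^n \le 1$ and $v^n \ge 0$. Hence $0 \le p^{n+1} \le 1$.

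\textbf{Steps 2--3 ($u$, then $v$).} Rewrite \eqref{semi:u} as the linear elliptic equation
\[
\bigl(I - \tau d_1 \Delta + \tau\, c^{n+1}\bigr) u^{n+1} = u^n, \qquad c^{n+1} := \tfrac1\epsilon\bigl[\, v^n + \lambda\,(1 - p^{n+1})\,\bigr].
\]
By Step 1 we have $1 - p^{n+1} \ge 0$, so together with $v^n \ge 0$ and $\lambda > 0$ the zeroth-order coefficient satisfies $c^{n+1} \ge 0$. Therefore the operator admits a comparison principle: $0$ is a subsolution, the constant $\|u^0\|_{L^\infty}$ is a supersolution (it is mapped to $(1+\tau c^{n+1})\|u^0\|_{L^\infty} \ge \|u^0\|_{L^\infty} \ge u^n$), and $0 \le u^n \le \|u^0\|_{L^\infty}$ therefore forces $0 \le u^{n+1} \le \|u^0\|_{L^\infty}$. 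The treatment of \eqref{semi:v} is identical, with $c^{n+1}$ replaced by $\tfrac1\epsilon\,(u^{n+1} + \lambda p^{n+1})$, which is nonnegative precisely because $u^{n+1} \ge 0$ and $p^{n+1} \ge 0$ have just been secured; comparison against the constant $\|v^0\|_{L^\infty}$ yields $0 \le v^{n+1} \le \|v^0\|_{L^\infty}$, closing the induction.

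The only delicate point — hence the main obstacle — is making the elliptic comparison principle rigorous on the unbounded domain $\mathbb{R}^d$: one has to fix a function class (bounded continuous functions, or the $L^2 \cap L^\infty$ setting in which the weak solutions of \eqref{eq:u}--\eqref{i.c.} live) in which $(I - \tau d\, \Delta + \tau c)^{-1}$ with $c \ge 0$ is a well-defined, order-preserving, sub-Markovian resolvent. In the fully discrete scheme constructed later this reduces to the routine observation that $I - \tau d\, \Delta_h + \tau\,\mathrm{diag}(c)$ is a nonsingular $M$-matrix, so the discrete maximum principle applies verbatim. No quantitative estimates are needed beyond this functional bookkeeping: the proof is the explicit formula for $p^{n+1}$ followed by two chained applications of the elliptic comparison principle.
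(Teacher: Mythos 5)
Your proposal is correct and follows the same skeleton as the paper's proof: induction in $n$, the explicit pointwise formula for $p^{n+1}$ with the same numerator/denominator estimate, and then inversion of a positive linear elliptic operator for $u^{n+1}$ and $v^{n+1}$ in that order. The one place where you genuinely diverge is the justification that $\left(\mathcal{L}^{n+1}\right)^{-1}$ preserves the bounds. The paper argues that the zeroth-order coefficient is $\geq 1$, concludes that all eigenvalues of $\mathcal{L}^{n+1}$ are $\geq 1$, and from this infers $0 \leq u^{n+1} = \left(\mathcal{L}^{n+1}\right)^{-1}u^n \leq u^n \leq \|u_\epsilon^0\|_{L^\infty(\mathbb{R}^d)}$. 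That spectral statement only yields an $L^2$-contraction, not order preservation, and the intermediate pointwise claim $u^{n+1} \leq u^n$ is not true in general (at a local minimum of $u^n$ the diffusion raises the value). Your route — $c^{n+1} \geq 0$, so $0$ is a subsolution and the constant $\|u^0\|_{L^\infty}$ is a supersolution, then apply the elliptic comparison principle — is the argument that actually delivers the pointwise bounds, and it correctly compares against the constant rather than against $u^n$ itself. You also flag the real technical debt, namely fixing a function class on $\mathbb{R}^d$ in which the resolvent is order-preserving, and observe that at the fully discrete level this becomes the $M$-matrix property; the paper defers exactly this to its Theorem 2.1, where the discrete maximum-principle argument is carried out. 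In short: same strategy, but your justification of the $u$ and $v$ steps is the more rigorous one.
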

\begin{proof}
	Assume that $0 \leq u^n \leq \|u_\epsilon^0\|_{L^{\infty}(\mathbb{R}^d)}$, $0 \leq v^n \leq \|v_\epsilon^0\|_{L^{\infty}(\mathbb{R}^d)}$ and $0 \leq p^n \leqslant 1$,
	\eqref{semi:p} leads that
	\begin{align}
		0 \leqslant 
		p^{n+1} = \frac{p^n + \frac{\tau}{\epsilon} u^n}{1 + \frac{\tau}{\epsilon} \left(u^n + v^n\right)} 
		\leqslant 
		\frac{1 + \frac{\tau}{\epsilon} u^n}{1 + \frac{\tau}{\epsilon} u^n + \frac{\tau}{\epsilon} v^n} 
		\leq 1. 
		\label{semi bound:p}
	\end{align}
	Rewrite \eqref{semi:u} as 
	\begin{align}
		\mathcal{L}^{n+1} u^{n+1} = u^n, \nonumber
	\end{align}
	where we define the linear operator $\mathcal{L}^{n+1}:= \left[1 + \frac{\tau}{\epsilon} \left(v^n + \lambda \left(1 - p^{n+1}\right)\right)\right]\mathcal{I} - d_1 \tau \Delta$ with $\mathcal{I}$ is the identity operator. 
	The coefficient $1 + \frac{\tau}{\epsilon} \left(v^n + \lambda \left(1 - p^{n+1}\right)\right)$ in $L$ is not smaller than 1 by the assumption $v^n \geq 0$ and the double inequality \eqref{semi bound:p} and then combined with the properties of the operator $-\Delta$, one has that all eigenvalues of the operator $\mathcal{L}^{n+1}$ are not smaller than 1.
	Hence 
	\begin{align}
		0 \leqslant 
		u^{n+1} = {\mathcal{L}^{n+1}}^{-1} u^n \leqslant u^n \leq \|u_\epsilon^0\|_{L^{\infty}(\mathbb{R}^d)}, 
		\nonumber
	\end{align}
	where ${\mathcal{L}^{n+1}}^{-1}$ is the inverse operator of $\mathcal{L}^{n+1}$.
	And the proofs for the non-negativity and upper-bound preserving properties of $v$ are similar to $u$, which we omit here.
	Hence we conclude that the semi-discrete scheme \eqref{semi:p}-\eqref{semi:v} satisfies the non-negativity and upper-bound preserving properties.
\end{proof}

\begin{lemma}[$L^2$ stability]
	If the condition \eqref{i.c.bounds} and $L^2$ stability hold for the initial data, the solutions $u^{n}$ and $v^{n}$ of the semi-discrete scheme \eqref{semi:p}-\eqref{semi:v} have the $L^2$ stability.
	\label{lemma:l2s}
\end{lemma}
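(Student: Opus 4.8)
The plan is to test each of the two parabolic equations \eqref{semi:u} and \eqref{semi:v} against the new iterate itself in $L^2(\mathbb{R}^d)$ and to exploit the sign of the reaction contributions, which by the bound-preserving Lemma~\ref{lemma:bp} is favorable. First I would multiply \eqref{semi:u} by $u^{n+1}$ and integrate over $\mathbb{R}^d$. The diffusion term gives $-d_1\int u^{n+1}\Delta u^{n+1}\,dx = d_1\|\nabla u^{n+1}\|_{L^2}^2 \geq 0$ after integration by parts (assuming decay at infinity, or periodic boundary conditions in the fully-discrete setting). The reaction term on the right is $-\frac{1}{\epsilon}\int (u^{n+1})^2\big[v^n+\lambda(1-p^{n+1})\big]\,dx$, which is $\leq 0$ because $v^n\geq 0$ and $0\leq p^{n+1}\leq 1$ by Lemma~\ref{lemma:bp}. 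For the time-difference term I would use the elementary identity (or inequality) $\int u^{n+1}(u^{n+1}-u^n)\,dx \geq \tfrac12\big(\|u^{n+1}\|_{L^2}^2-\|u^n\|_{L^2}^2\big)$, valid since $2a(a-b)=a^2-b^2+(a-b)^2\geq a^2-b^2$.

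Assembling these three observations for \eqref{semi:u} yields
\begin{align}
\frac{1}{2\tau}\left(\|u^{n+1}\|_{L^2}^2-\|u^{n}\|_{L^2}^2\right)+d_1\|\nabla u^{n+1}\|_{L^2}^2
\leq -\frac{1}{\epsilon}\int_{\mathbb{R}^d}(u^{n+1})^2\left[v^n+\lambda(1-p^{n+1})\right]dx\leq 0,
\nonumber
\end{align}
so in particular $\|u^{n+1}\|_{L^2}\leq\|u^{n}\|_{L^2}$, and inductively $\|u^{n}\|_{L^2}\leq\|u^{0}\|_{L^2}$. The argument for \eqref{semi:v} is identical: testing against $v^{n+1}$ produces $d_2\|\nabla v^{n+1}\|_{L^2}^2\geq 0$ from diffusion, while the reaction term $-\frac{1}{\epsilon}\int (v^{n+1})^2(u^{n+1}+\lambda p^{n+1})\,dx\leq 0$ because $u^{n+1}\geq 0$ and $p^{n+1}\geq 0$ (again Lemma~\ref{lemma:bp}), giving $\|v^{n+1}\|_{L^2}\leq\|v^{n}\|_{L^2}$. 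Hence both sequences are non-increasing in $L^2$, which is the claimed stability; note $p^{n+1}$ need not be controlled in $L^2$ since it is pointwise bounded by Lemma~\ref{lemma:bp}.

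The only genuinely delicate point is the handling of the boundary/decay terms in the integration by parts for the diffusion operator. In the semi-discrete-in-time setting on $\mathbb{R}^d$ this requires the solutions to lie in a suitable space (e.g.\ $H^1$ with sufficient decay), which one inherits from the bound-preserving estimates together with parabolic regularity of the elliptic problems $\mathcal{L}^{n+1}u^{n+1}=u^n$; in the fully-discrete finite-difference formulation it is automatic once periodic or homogeneous Neumann boundary conditions are imposed, since summation by parts has no boundary contribution. Everything else is a sign-chasing exercise made possible entirely by the positivity and upper bounds already established, so I expect no substantive obstacle beyond stating the functional-analytic framework cleanly.
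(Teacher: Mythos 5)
Your proposal is correct and follows essentially the same route as the paper: test \eqref{semi:u} against $u^{n+1}$, integrate by parts, bound the cross term $\int u^n u^{n+1}\,\mathrm{d}x$ via Cauchy's inequality (your identity $2a(a-b)\ge a^2-b^2$ is the same estimate), and discard the nonpositive diffusion and reaction contributions using Lemma~\ref{lemma:bp}. Your added remarks on the decay hypotheses needed for the integration by parts on $\mathbb{R}^d$ are a point the paper passes over silently, but they do not change the argument.
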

\begin{proof}
	Assume that $\|u^{n}\|_{L^2\left(\mathbb{R}^d\right)}$ exists.
	Multiplying $u^{n+1}$ to \eqref{semi:u} and integrating $x$ over $\mathbb{R}^d$, one has 
	\begin{align}
		\int_{\mathbb{R}^d} \left(u^{n+1}\right)^2 \,\mathrm{d} x - \int_{\mathbb{R}^d} u^n u^{n+1} \,\mathrm{d} x - d_1 \tau \int_{\mathbb{R}^d} u^{n+1} \Delta u^{n+1} \,\mathrm{d} x = -\frac{\tau}{\epsilon} \int_{\mathbb{R}^d} \left(u^{n+1}\right)^2 \left[v^n + \lambda \left(1 - p^{n+1}\right)\right] \,\mathrm{d} x.
		\nonumber
	\end{align}
	By using Cauchy inequality, integration by parts and Lemma \ref{lemma:bp}, one gets that
	\begin{align}
		\frac{1}{2} \|u^{n+1}\|_{L^2\left(\mathbb{R}^d\right)}^2 
		- \frac{1}{2} \|u^{n}\|_{L^2\left(\mathbb{R}^d\right)}^2 
		&\leq 
		\|u^{n+1}\|_{L^2\left(\mathbb{R}^d\right)}^2 
		- \int_{\mathbb{R}^d} u^n u^{n+1} \,\mathrm{d} x \nonumber \\
		&= - d_1 \|\nabla u^{n+1}\|_{L^2\left(\mathbb{R}^d\right)}^2
		-\frac{\tau}{\epsilon} \int_{\mathbb{R}^d} \left(u^{n+1}\right)^2 \left[v^n + \lambda \left(1 - p^{n+1}\right)\right] \,\mathrm{d} x \nonumber \\
		&\leq 0, \nonumber
	\end{align}
	which leads that the solution $u^{n+1}$ of the semi-discrete scheme \eqref{semi:p}-\eqref{semi:v} has the $L^2$ stability and the same goes for $v^{n+1}$.
\end{proof}

Lemma \ref{lemma:bp} and Lemma \ref{lemma:l2s} tell us that the semi-discrete semi-implicit scheme \eqref{semi:p}-\eqref{semi:v} is not only solved without a nonlinear solver, but also maintains the properties of the solutions of the fast reaction--diffusion system \eqref{eq:u}-\eqref{i.c.}, including the non-negativity, upper-bound preserving properties and the $L^2$ stability results. When considering the fully discrete scheme, the effect of the boundary conditions will be studied as well.

In addition, we mention that at the continuum level, the linearized system which is obtained by expanding the continuum model \eqref{eq:u}-\eqref{eq:p} at the limiting solutions \eqref{limit_solution} is stable. And the linearized system is written in the following form
\begin{align}
	\frac{\partial}{\partial t} s - D \Delta s = J\left(s^*\right)s.
	\label{linear3}
\end{align}
In the linearized system \eqref{linear3}, $s = \left(u, v, p\right)^\prime$, and $s^* = \left(u^*, v^*, p^*\right)^\prime$ is assumed as the limiting solution of the original semi-linear system \eqref{eq:u}-\eqref{eq:p} as $\epsilon \to 0$, i.e. 
\begin{align}
	s^* = \left\{
	\begin{array}{ll}
		s^*_1:=\left(0, v^*, 0\right)^\prime, &\text{if~~} u = 0, v > 0, \\
		s^*_2:=\left(u^*, 0, 1\right)^\prime, &\text{if~~} u > 0, v = 0, \\
		s^*_3:=\left(0, 0, p_1\right)^\prime, ~~p_1 \in [0, 1], &\text{if~~} u = 0, v = 0, \\
	\end{array}
	\right.
	\label{limit}
\end{align}
with $u^* \neq 0$ and $v^* \neq 0$, 
$w = \left(u-u^*, v-v^*, p-p^*\right)^\prime := (u, v, p)^\prime$,
the coefficient matrix $D$ and the Jacobi matrix $J$ in \eqref{linear3} are given as
\begin{align}
	D := 
	\left[
	\begin{array}{ccc}
		d_1  & &  \\
		& d_2  & \\
		& & 0
	\end{array}
	\right],
	\nonumber
\end{align}
and 
\begin{align}
	J(s^*) = \frac{1}{\epsilon} \left[
	\begin{array}{ccc}
		-v^*-\lambda(1-p^*) & -u^* & \lambda u^* \\
		-v^*& -(u^*+\lambda p^*) & -\lambda v^*\\
		1-p^*& -p^* & -(u^*+v^*)
	\end{array}
	\right].
	\label{Jacobi}
\end{align}
Next, we show that when applying the semi-discrete semi-implicit strategy we mentioned before to the linearized system \eqref{linear3}, the stability result holds at the semi-discrete level, that is to say, such a semi-discrete semi-implicit strategy does not generate unstable spatial modes. 

\begin{lemma}[Stability near the limit behavior for the linearized setting]
	For the linearized version \eqref{linear3} of the system \eqref{eq:u}-\eqref{eq:p}, the semi-discrete semi-implicit scheme is stable near the limiting solution \eqref{limit}.
	\label{lemma:ls}
\end{lemma}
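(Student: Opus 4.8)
The plan is to run a Fourier-mode (von Neumann--type) analysis of the semi-discrete semi-implicit discretization of \eqref{linear3}, exploiting the special structure of the limiting states in \eqref{limit}. First I would write down the semi-discrete scheme for \eqref{linear3} obtained by the same implicit/explicit splitting as in \eqref{semi:p}--\eqref{semi:v}: the diffusion term $-D\Delta s$ is taken implicitly, and in each scalar equation the diagonal (``self'') contribution of $J(s^*)$ in \eqref{Jacobi} is taken implicitly while the off-diagonal couplings use the most recent available iterates in the solve order $p\to u\to v$. Since \eqref{linear3} has constant coefficients once $s^*$ is frozen, I would pass to Fourier variables, $s^n(x)=\widehat s^{\,n}e^{\mathrm i\xi\cdot x}$, and set $k:=|\xi|^2\ge 0$; the scheme then reads $\widehat s^{\,n+1}=G(k)\,\widehat s^{\,n}$ for a $3\times 3$ amplification matrix $G(k)$, and stability reduces to showing that the powers $G(k)^n$ stay bounded uniformly in $n$ and $k$, for which it is enough that $\rho\big(G(k)\big)\le 1$ and that every eigenvalue of modulus $1$ is semisimple.

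The decisive observation is that at each limiting state $s^*_1,s^*_2,s^*_3$ in \eqref{limit} at least one of $u^*,v^*$ vanishes, so one of the update equations for $\widehat u^{\,n+1},\widehat v^{\,n+1}$ decouples, and after reordering the unknowns $G(k)$ becomes triangular. Carrying out the three elementary scalar solves, the diagonal entries of $G(k)$ turn out to be among $1/a$, $1/b$, $1/c$ (and, in the case $s^*_3$, the value $1$), where
\begin{align}
a=1+\tfrac{\tau}{\epsilon}(u^*+v^*),\qquad b=1+d_1k\tau+\tfrac{\tau}{\epsilon}\big(v^*+\lambda(1-p^*)\big),\qquad c=1+d_2k\tau+\tfrac{\tau}{\epsilon}(u^*+\lambda p^*).
\end{align}
Since $\tau,\epsilon,d_1,d_2,\lambda>0$, $u^*,v^*\ge 0$, $0\le p^*\le 1$ and $k\ge 0$, each of $a,b,c$ is $\ge 1$, hence every eigenvalue of $G(k)$ lies in $(0,1]$ and $\rho(G(k))\le 1$; moreover $b,c\to\infty$ as $k\to\infty$, which with the continuity of $G(k)$ in $k$ gives the uniform-in-$k$ control for $k$ bounded away from $0$.

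It remains to treat the eigenvalue $1$. It can occur only when one of $a,b,c$ equals $1$; since $u^*\neq 0$ at $s^*_2$ and $v^*\neq 0$ at $s^*_1$, this forces $k=0$, and in the worst sub-cases ($s^*_3$ with $p_1\in\{0,1\}$) the eigenvalue $1$ has algebraic multiplicity two. There, however, the triangular form shows that $G(0)-I$ has rank one, so the geometric multiplicity is also two and no Jordan block appears; consequently there is no secular (polynomial-in-$n$) growth. Putting these pieces together yields $\sup_{n\ge 0,\,k\ge 0}\|G(k)^n\|<\infty$, i.e.\ the semi-discrete semi-implicit scheme introduces no unstable spatial modes near any of the limiting solutions \eqref{limit}, which is exactly the assertion. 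The main obstacle is bookkeeping rather than conceptual: one has to assemble $G(k)$ consistently with the splitting of \eqref{semi:p}--\eqref{semi:v} in each of the three cases and check the semisimplicity of the modulus-one eigenvalue; once the triangular structure is displayed, the spectral radius estimate is immediate.
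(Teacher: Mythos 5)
Your proposal follows essentially the same route as the paper: a Fourier/von Neumann analysis of the semi-implicitly discretized linearized system, exploiting the fact that at each limiting state in \eqref{limit} at least one of $u^*,v^*$ vanishes so that the amplification matrix becomes triangular, with eigenvalues $1/a$, $1/b$, $1/c$ lying in $(0,1]$ --- these are exactly the reciprocals of the three factors the paper extracts from $\det\left(M_{k,i}^{*}\right)=0$ in each of the three cases. Your extra step of verifying that the eigenvalue $1$ is semisimple (so no Jordan block and no secular growth in $n$) is a refinement the paper omits, but the core argument is identical.
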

\begin{proof}	
	After applying the semi-discrete semi-implicit strategy to the linearized system \eqref{linear3}, one has 
	\begin{align}
		\frac{p^{n+1} - p^{n}}{\tau} = J(s^*)_{31} u^{n} + J(s^*)_{32} v^{n} + J(s^*)_{33} p^{n+1}, \nonumber \\
		\frac{u^{n+1} - u^{n}}{\tau} - d_1 \Delta u^{n} = J(s^*)_{11} u^{n+1} + J(s^*)_{12} v^{n} + J(s^*)_{13} p^{n+1}, \nonumber\\
		\frac{v^{n+1} - v^{n}}{\tau} - d_2 \Delta v^{n} = J(s^*)_{21} u^{n+1} + J(s^*)_{22} v^{n+1} + J(s^*)_{23} p^{n+1}, \nonumber
	\end{align}
	or equivalently,
	\begin{align}
		\frac{p^{n+1} - p^{n}}{\tau} &= \frac{1}{\epsilon} [(1 - p^{*}) u^n - p^{*} v^n - (u^{*} + v^{*}) p^{n+1}], \label{linear semi:p} \\
		\frac{u^{n+1} - u^{n}}{\tau} - d_1 \Delta u^{n} &= \frac{1}{\epsilon} \left[ - (v^{*} + \lambda (1 - p^{*})) u^{n+1} - u^{*} v^n + \lambda u^{*} p^{n+1} \right] , \label{linear semi:u} \\
		\frac{v^{n+1} - v^{n}}{\tau} - d_2 \Delta v^{n} &= \frac{1}{\epsilon} \left[ - v^{*} u^{n+1} - (u^{*} + \lambda p^{*}) v^{n+1} - \lambda v^{*} p^{n+1} \right], \label{linear semi:v} 
	\end{align}
	by using the expression \eqref{Jacobi}.
	Plugging 
	$$
	p^n = \sum_{k = 1}^{\infty} \alpha_k \lambda_k^{n} e^{i m_k x}, \quad
	u^n = \sum_{k = 1}^{\infty} \beta_k \lambda_k^{n} e^{i m_k x}, \quad
	v^n = \sum_{k = 1}^{\infty} \gamma_k \lambda_k^{n} e^{i m_k x},
	$$
	into the semi-discrete semi-implicit scheme \eqref{linear semi:p}-\eqref{linear semi:v}, one has
	\begin{align}
		\frac{\gamma_k(\lambda_k - 1)}{\tau} &= \frac{1}{\epsilon} [(1 - p^{*}) \alpha_k - p^{*} \beta_k - (u^{*} + v^{*}) \lambda_k \gamma_k], \label{err:linear semi:p} \\
		\frac{\alpha_k(\lambda_k - 1)}{\tau} - \frac{d_1 \alpha_k \lambda_k (e^{i m_k h} + e^{- i m_k h} - 2)}{h^2} &= \frac{1}{\epsilon} \left[ - (v^{*} + \lambda (1 - p^{*})) \lambda_k \alpha_k - u^{*} \beta_k + \lambda u^{*} \lambda_k \gamma_k \right] , \label{err:linear semi:u} \\
		\frac{\beta_k(\lambda_k - 1)}{\tau} - \frac{d_2 \beta_k \lambda_k (e^{i m_k h} + e^{- i m_k h} - 2)}{h^2} &= \frac{1}{\epsilon} \left[ - v^{*} \lambda_k \alpha_k - (u^{*} + \lambda p^{*}) \lambda_k \beta_k -  \lambda v^{*} \lambda_k \gamma_k \right], \label{err:linear semi:v} 
	\end{align}
	where $\lambda_k$ is the growth factor.
	Let $r = \tau/h^2$, $r_\epsilon = \tau/\epsilon$, then we can rewrite the equations \eqref{err:linear semi:p}-\eqref{err:linear semi:v} as the following form
	\begin{align}
		M_{k}^{*}
		\left[
		\begin{array}{c}
			{\alpha_k} \\
			{\beta_k} \\
			{\gamma_k}
		\end{array}
		\right] = 0, \nonumber
	\end{align}
	with 
	\begin{align}
		M_{k}^{*} := 
		\left[
		\begin{array}{ccc}
			b_1(k) + r_\epsilon (v^{*} + \lambda (1 - p^{*})) \lambda_k & r_\epsilon u^{*} & -r_\epsilon \lambda u^{*} \lambda_k \\
			r_\epsilon v^{*} \lambda_k & b_2(k) + r_\epsilon (u^{*} + \lambda p^{*}) \lambda_k & r_\epsilon \lambda  v^{*} \lambda_k \\
			- r_\epsilon (1 - p^{*}) & r_\epsilon p^{*} & \lambda_k - 1 + r_\epsilon (u^{*} + v^{*}) \lambda_k 
		\end{array}
		\right],
		\label{err:matrix}
	\end{align}
	here 
	\begin{align}
		b_1(k) &= \lambda_k - 1 - 2 d_1 r \lambda_k (\cos(m_k h) - 1), \nonumber \\
		b_2(k) &= \lambda_k - 1 - 2 d_2 r \lambda_k (\cos(m_k h) - 1). \nonumber
	\end{align}
	
	According to the expression \eqref{limit} for $s^*$, there are three scenarios.
	The first case is $s^* = s_1^*$, i.e. $u^* = 0, v^* \ne 0, p^* = 0$, then \eqref{err:matrix} becomes 
	\begin{align}
		M_{k,1}^{*} = 
		\left[
		\begin{array}{ccc}
			\lambda_k - 1 - 2 d_1 r \lambda_k (\cos(m_k h) - 1) + r_\epsilon (v^{*} + \lambda) \lambda_k & 0 & 0 \\
			r_\epsilon v^{*} \lambda_k & \lambda_k - 1 - 2 d_2 r \lambda_k (\cos(m_k h) - 1) & r_\epsilon \lambda v^{*} \lambda_k \\
			- r_\epsilon & 0 & \lambda_k - 1 + r_\epsilon v^{*} \lambda_k 
		\end{array}
		\right].
		\nonumber
	\end{align}
	To have nonzero solutions $(\alpha_k, \beta_k, \gamma_k)^\prime$, we have $\det \left(M_{k,1}^{*}\right) = 0$, 
	that is to say,
	\begin{align}
		\left[ 1 + r_\epsilon (v^{*} + \lambda) + 2 d_1 r (1 - \cos(m_k h)) \right]\lambda_k = 1, \nonumber \\
		\text{or~~} \left[ 1 + 2 d_2 r (1 - \cos(m_k h)) \right]\lambda_k = 1, \nonumber \\
		\text{or~~} (1 + r_\epsilon v^{*}) \lambda_k = 1, \nonumber
	\end{align}
	which leads that $0 < \lambda_k \leq 1$ for all nonzero solutions $(\alpha_k, \beta_k, \gamma_k)^\prime$.
	The second case is $s^* = s_2^*$, i.e. $u^* \ne 0, v^* = 0, p^* = 1$, then \eqref{err:matrix} becomes 
	\begin{align}
		&M_{k,2}^{*} = \nonumber \\
		&\left[
		\begin{array}{ccc}
			\lambda_k - 1 - 2 d_1 r \lambda_k (\cos(m_k h) - 1) & r_\epsilon u^{*} & -r_\epsilon \lambda u^{*} \lambda_k \\
			0 & \lambda_k - 1 - 2 d_2 r \lambda_k (\cos(m_k h) - 1) + r_\epsilon (u^{*} + \lambda) \lambda_k & 0 \\
			0 & r_\epsilon & \lambda_k - 1 + r_\epsilon u^{*} \lambda_k 
		\end{array}
		\right].
		\nonumber
	\end{align}
	To have nonzero solutions $(\alpha_k, \beta_k, \gamma_k)^\prime$, one has $\det \left(M_{k,2}^{*}\right) = 0$, 
	i.e.
	\begin{align}
		\left[ 1 + r_\epsilon v^{*} + 2 d_1 r (1 - \cos(m_k h)) \right]\lambda_k = 1, \nonumber \\
		\text{or~~} \left[ 1 + 2 d_2 r (1 - \cos(m_k h)) + r_\epsilon (u^{*} + \lambda) \right]\lambda_k = 1, \nonumber \\
		\text{or~~} (1 + r_\epsilon u^{*}) \lambda_k = 1, \nonumber
	\end{align}
	hence $0 < \lambda_k < 1$ for all nonzero solutions $(\alpha_k, \beta_k, \gamma_k)^\prime$. The third case is $s^* = s_3^*$, i.e. $u^* = 0, v^* = 0, p^* \in [0, 1]$, then \eqref{err:matrix} becomes 
	\begin{align}
		M_{k,3}^{*} = 
		\left[
		\begin{array}{ccc}
			\lambda_k - 1 - 2 d_1 r \lambda_k (\cos(m_k h) - 1) + r_\epsilon \lambda (1 - p^{*}) \lambda_k & 0 & 0 \\
			0 & \lambda_k - 1 - 2 d_2 r \lambda_k (\cos(m_k h) - 1) + r_\epsilon \lambda p^{*} \lambda_k & 0 \\
			- r_\epsilon (1 - p^{*}) & r_\epsilon p^{*} & \lambda_k - 1 
		\end{array}
		\right].
		\nonumber
	\end{align}
	Similarly, to have nonzero solutions $(\alpha_k, \beta_k, \gamma_k)^\prime$, one has $\det \left(M_{k,3}^{*}\right) = 0$, 
	i.e.
	\begin{align}
		\left[ 1 + r_\epsilon \lambda (1 - p^{*}) + 2 d_1 r (1 - \cos(m_k h)) \right]\lambda_k = 1, \nonumber\\
		\text{or~~}\left[ 1 + 2 d_2 r (1 - \cos(m_k h)) + r_\epsilon \lambda p^{*} \right]\lambda_k = 1, \nonumber\\
		\text{or~~}\lambda_k = 1, \nonumber
	\end{align}
	which leads that $0 < \lambda_k \leq 1$ for all nonzero solutions $(\alpha_k, \beta_k, \gamma_k)^\prime$.
	
	All in all, the semi-discrete semi-implicit scheme \eqref{linear semi:p}-\eqref{linear semi:v} preserves the stability results near the limiting solution at the semi-discrete level.
\end{proof}

Note that the estimate above is near the spatial homogeneous limiting solution \eqref{linear3}.
Next, we investigate the limiting behavior involving dynamical phase transition.
Following the fact that the limit behavior of the system \eqref{eq:u}-\eqref{i.c.} is described by the Stefan problem \eqref{eq:stefan} as $\epsilon$ tends to 0, we give the residual error estimates of the scheme \eqref{semi:p}-\eqref{semi:v} about the reactivity coefficients.

Let $w^n = u^n - v^n + \lambda p^n$, then the semi-discrete semi-implicit scheme \eqref{semi:p}-\eqref{semi:v} leads that
\begin{align}
	\frac{w^{n+1} - w^{n}}{\tau} - d_1 \Delta u^{n+1} + d_2 \Delta v^{n+1} = R^{n+1}, 
	\nonumber
\end{align}
where the residual error
\begin{align}
	R^{n+1} = -\frac{1}{\epsilon} u^{n+1} \left[v^n + \lambda \left(1 - p^{n+1}\right)\right] + \frac{1}{\epsilon} v^{n+1} (u^{n+1} + \lambda p^{n+1}) + \frac{\lambda}{\epsilon} \left[\left(1 - p^{n+1}\right) u^n - v^n p^{n+1}\right]. 
	\nonumber
\end{align}
We mention that if the condition \eqref{i.c.bounds} holds for the initial data, the semi-discrete solution $w^n = u^n - v^n + \lambda p^n$ is a good approximation to the solution of the Stefan problem \eqref{eq:stefan} as long as $\epsilon$ tends to 0 and $\tau$ is small enough.

In fact, assume that $\epsilon$ tends to 0 and the condition \eqref{i.c.bounds} holds, then for $n = 1, 2, \cdots$, we consider the expansion expressions
\begin{align}
	u^n = \sum_{k = 0}^{\infty} u^n_k \epsilon^k, \quad
	v^n = \sum_{k = 0}^{\infty} v^n_k \epsilon^k, \quad
	p^n = \sum_{k = 0}^{\infty} p^n_k \epsilon^k. \label{ee} 
\end{align}
On one hand, plugging the expressions \eqref{ee} into the residual error $R^{n+1}$, one gets $R^{n+1} = \sum_{k = -1}^{\infty} R_k^{n+1} O(\epsilon^{k})$, where 
$$
R_{-1}^{n+1} = -u_0^{n+1} \left[v_0^n + \lambda \left(1 - p_0^{n+1}\right)\right] + v_0^{n+1} (u_0^{n+1} + \lambda p_0^{n+1}) + \lambda \left[\left(1 - p_0^{n+1}\right) u_0^n - v_0^n p_0^{n+1}\right],
$$
and
$$
R_{0}^{n+1} = \sum_{i + j = 1} \left( -u_i^{n+1} v_j^n + \lambda u_i^{n+1} p_j^{n+1} + u_i^{n+1} v_j^{n+1} + \lambda v_i^{n+1} p_j^{n+1} - \lambda u_i^{n} p_j^{n+1} - \lambda v_i^{n} p_j^{n+1} \right) - \lambda u_1^{n+1} + \lambda u_1^{n}.
$$
On the other hand, plugging the expressions \eqref{ee} into the semi-discrete scheme \eqref{semi:p}-\eqref{semi:v} and comparing $O({\epsilon}^{-1})$ term in the left and right sides of the scheme, we obtain
\begin{align}
	0 &= \left(1 - p_0^{n+1}\right) u_0^{n} - v_0^{n} p_0^{n+1}, \nonumber \\
	0 &= u_0^{n+1} \left[v_0^{n} + \lambda \left(1 - p_0^{n+1}\right)\right],  \nonumber \\
	0 &= v_0^{n+1} \left(u_0^{n+1} + \lambda p_0^{n+1}\right), \nonumber
\end{align}
which leads that $R_{-1}^{n+1} = 0$.
Next comparing $O(\epsilon^0)$ term in the left and right sides of the scheme, for given $\tau$, we have
\begin{align}
	\frac{p_0^{n+1} - p_0^{n}}{\tau} &= u_1^{n} + \sum_{i + j = 1} \left(- \lambda u_i^{n} p_j^{n+1} - \lambda v_i^{n} p_j^{n+1} \right), \nonumber \\
	\frac{u_0^{n+1} - u_0^{n}}{\tau} - d_1 \Delta u_0^{n+1} &= - \lambda u_1^{n+1} + \sum_{i + j = 1} \left( -u_i^{n+1} v_j^n + \lambda u_i^{n+1} p_j^{n+1} \right), \nonumber \\
	\frac{v_0^{n+1} - v_0^{n}}{\tau} - d_2 \Delta v_0^{n+1} &= \sum_{i + j = 1} \left( u_i^{n+1} v_j^{n+1} + \lambda v_i^{n+1} p_j^{n+1} \right). \nonumber 
\end{align}
Thus 
\begin{align}
	R_{0}^{n+1} = \frac{\left(u_0^{n+1} - v_0^{n+1} + \lambda p_0^{n+1}\right) - \left(u_0^{n} - v_0^{n} + \lambda p_0^{n}\right)}{\tau} - d_1 \Delta u_0^{n+1} + d_2 \Delta v_0^{n+1}.
	\nonumber
\end{align}
Recall that the scheme \eqref{semi:p}-\eqref{semi:v} is of first-order convergence in time, then one has 
\begin{align}
	R_{0}^{n+1} &= \frac{\left(u^{n+1} - v^{n+1} + \lambda p^{n+1}\right) - \left(u^{n} - v^{n} + \lambda p^{n}\right)}{\tau} - d_1 \Delta u^{n+1} + d_2 \Delta v^{n+1} + O(\frac{\epsilon}{\tau}) + O(\epsilon)\nonumber \\
	&= \partial_t w(x, t_{n+1}) - d_1 \Delta u(x, t_{n+1}) + d_2 \Delta v(x, t_{n+1}) + O(\tau) + O(\frac{\epsilon}{\tau}) + O(\epsilon) \nonumber \\ 
	&= O(\tau), ~~ \epsilon \to 0. \nonumber
\end{align}
Let the temporal size $\tau$ tends to 0, then $R_{0}^{n+1}$ tends to 0, that is to say, the semi-discrete solution $w^n = u^n - v^n + \lambda p^n$ is a good approximation to the solution of the Stefan problem \eqref{eq:stefan} as long as $\epsilon$ tends to 0 and $\tau$ is small enough. 

\subsection{First-order fully discrete semi-implicit scheme}
Now following the semi-implicit treatment in Subsection \ref{subsec: semi}, we introduce the fully discrete semi-implicit scheme of the system \eqref{eq:u}-\eqref{eq:p} equipped with the initial condition \eqref{i.c.} and Dirichlet boundary conditions 
\begin{align}
	\gamma(x, t) = \gamma_{\partial \Omega}(t), ~~\gamma = u, v, ~~x = (x_1, \cdots, x_d)^{\prime} \in \partial \Omega, ~~t > 0, \label{Dirichlet}
\end{align}
where $\gamma_{\partial \Omega}(t)$ for $\gamma = u, v$ is some given function and the computational domain $\Omega = [a, b]^{d}, b > a,$ is considered for simplicity. 
If given the number of the spatial grid nodes $N$, the uniform mesh grid on $\Omega \times [0, T]$ with the spatial size $h = (b-a)/N$ and the temporal size $\tau$ is given as $\mathcal T_{\tau, h} =\{(x_{\boldsymbol{j}}, t_{n}) ~\big|~\boldsymbol{j} = (j_1, \cdots, j_d)^\prime, ~x_{\boldsymbol{j}} = ({x_1}_{j_1}, \cdots, {x_d}_{j_d})^\prime, ~{x_i}_{j_i} = a + j_i h, ~t_{n} = n \tau, ~h = (b-a)/N, ~j_i = 0, 1, \cdots, N, ~n = 1, 2, \cdots, ~i = 1, \cdots, d \}$.  
For any $\boldsymbol{j}$ in the interior and $n = 1, 2, \cdots$, a fully discrete semi-implicit scheme of the equations \eqref{eq:u}-\eqref{eq:p} reads as
\begin{align}
	\frac{p_{\boldsymbol{j}}^{n+1} - p_{\boldsymbol{j}}^{n}}{\tau} &= \frac{1}{\epsilon} [(1 - p_{\boldsymbol{j}}^{n+1}) u_{\boldsymbol{j}}^n - v_{\boldsymbol{j}}^n p_{\boldsymbol{j}}^{n+1}], \label{fully:p} \\
	\frac{u_{\boldsymbol{j}}^{n+1} - u_{\boldsymbol{j}}^{n}}{\tau} - d_1 \Delta_h u_{\boldsymbol{j}}^{n+1} &= -\frac{1}{\epsilon} u_{\boldsymbol{j}}^{n+1} [v_{\boldsymbol{j}}^n + \lambda (1 - p_{\boldsymbol{j}}^{n+1})],  \label{fully:u} \\
	\frac{v_{\boldsymbol{j}}^{n+1} - v_{\boldsymbol{j}}^{n}}{\tau} - d_2 \Delta_h v_{\boldsymbol{j}}^{n+1} &= -\frac{1}{\epsilon} v_{\boldsymbol{j}}^{n+1} (u_{\boldsymbol{j}}^{n+1} + \lambda p_{\boldsymbol{j}}^{n+1}), \label{fully:v} 
\end{align}
where $p_{\boldsymbol{j}}^{n}, u_{\boldsymbol{j}}^{n}$ and $v_{\boldsymbol{j}}^{n}$ are the finite difference solutions on the node $(x_{\boldsymbol{j}}, t_n)$, $\Delta_h$ is the second-order center difference operator. 
In the initial time steps, for $x_{\boldsymbol{j}} \in \Omega$, we choose
\begin{align}
	p_{\boldsymbol{j}}^0 = p(x_{\boldsymbol{j}}, 0), \quad u_{\boldsymbol{j}}^0 = u(x_{\boldsymbol{j}}, 0), \quad v_{\boldsymbol{j}}^0 = v(x_{\boldsymbol{j}}, 0). \label{ic_fully}
\end{align}
And the discrete Dirichlet boundary conditions are given as 
\begin{align}
	u(x_{\boldsymbol{j}}, t_n) = u_{\partial \Omega}(t_n), \quad
	v(x_{\boldsymbol{j}}, t_n) = v_{\partial \Omega}(t_n), \label{bc_fully}
\end{align}
where $x_{\boldsymbol{j}} \in \partial \Omega$, i.e. $\exists\ i = 1, \cdots, d$, such that $j_i = 0$ or $N$. 
Rewrite the schemes \eqref{fully:p}-\eqref{fully:v} as the following form, for any $\boldsymbol{j}$ and $n = 1, 2, \cdots$,
\begin{align}
	\left[1 + \frac{\tau}{\epsilon} (u_{\boldsymbol{j}}^n + v_{\boldsymbol{j}}^n)\right] p_{\boldsymbol{j}}^{n+1} &= p_{\boldsymbol{j}}^n + \frac{\tau}{\epsilon} u_{\boldsymbol{j}}^n, \label{fully:p1} \\
	\left[1 + \frac{\tau}{\epsilon} (v_{\boldsymbol{j}}^n + \lambda (1 - p_{\boldsymbol{j}}^{n+1})) - d_1 \tau \Delta_h \right] u_{\boldsymbol{j}}^{n+1} &= u_{\boldsymbol{j}}^n, \label{fully:u1} \\
	\left[1 + \frac{\tau}{\epsilon} (u_{\boldsymbol{j}}^{n+1} + \lambda p_{\boldsymbol{j}}^{n+1}) - d_2 \tau \Delta_h \right] v_{\boldsymbol{j}}^{n+1} &= v_{\boldsymbol{j}}^n, \label{fully:v1}
\end{align}
where $\Delta_h$ is the three- and five-point center difference operator for one- and two-dimensional case respectively.
Take $d = 1$ as an example, then the first-order fully discrete semi-implicit scheme \eqref{fully:p1}-\eqref{fully:v1} in one-dimension becomes
\begin{align}
	\left[1 + \frac{\tau}{\epsilon} (u_j^n + v_j^n)\right] p_j^{n+1} &= p_j^n + \frac{\tau}{\epsilon} u_j^n, \label{fully:p2} \\
	\left[1 + \frac{\tau}{\epsilon} (v_j^n + \lambda (1 - p_j^{n+1})) + 2 d_1 \frac{\tau}{h^2}\right] u_j^{n+1} - d_1 \frac{\tau}{h^2} u_{j+1}^{n+1} - d_1 \frac{\tau}{h^2} u_{j-1}^{n+1} &= u_j^n, \label{fully:u2} \\
	\left[1 + \frac{\tau}{\epsilon} (u_j^{n+1} + \lambda p_j^{n+1}) + 2 d_2 \frac{\tau}{h^2}\right] v_j^{n+1} - d_2 \frac{\tau}{h^2} v_{j+1}^{n+1} - d_2 \frac{\tau}{h^2} v_{j-1}^{n+1} &= v_j^n. \label{fully:v2}
\end{align}

Next, we show that like the semi-discrete scheme \eqref{semi:p}-\eqref{semi:v}, the first-order fully discrete semi-implicit scheme \eqref{fully:p1}-\eqref{fully:v1} combined with the discrete initial condition \eqref{ic_fully} and boundary conditions \eqref{bc_fully}  preserves various properties of the system \eqref{eq:u}-\eqref{eq:p}, including the non-negativity preserving property, $L^2$ estimate, and the stability result for the corresponding linearized version. 
\begin{theorem}[Bound preserving]
	\label{thm:bp}
	Consider the first-order semi-implicit scheme \eqref{fully:p1}-\eqref{fully:v1}. 
	Assume that the discrete initial condition $0 \leq u_{\boldsymbol{j}}^0 \leq C_u$, $0 \leq v_{\boldsymbol{j}}^0 \leq C_v$ and $0 \leq  p_{\boldsymbol{j}}^0 \leqslant 1$ for all ${\boldsymbol{j}}$, the discrete Dirichlet boundary condition $0 \leq u_{\partial \Omega}(t_n) \leq C_u$,   $0 \leq v_{\partial \Omega}(t_n) \leq C_v$ for $n = 1, 2, \cdots$, then one has $0 \leq u_{\boldsymbol{j}}^n \leq C_u$, $0 \leq v_{\boldsymbol{j}}^n\leq C_v$ and $0 \leq p_{\boldsymbol{j}}^n \leqslant 1$ for all $\boldsymbol{j}$. 
\end{theorem}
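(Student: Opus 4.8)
The plan is to argue by induction on $n$, mirroring the proof of Lemma \ref{lemma:bp} but now keeping track of the spatial stencil and the Dirichlet data. Assume that $0 \leq u_{\boldsymbol{j}}^n \leq C_u$, $0 \leq v_{\boldsymbol{j}}^n \leq C_v$ and $0 \leq p_{\boldsymbol{j}}^n \leq 1$ for all $\boldsymbol{j}$; the case $n=0$ is the hypothesis, and boundary nodes at every level are covered directly by the assumption $0 \leq u_{\partial\Omega}(t_n) \leq C_u$, $0 \leq v_{\partial\Omega}(t_n) \leq C_v$. The update for $p$ is purely algebraic and pointwise: \eqref{fully:p1} gives $p_{\boldsymbol{j}}^{n+1} = \left(p_{\boldsymbol{j}}^n + \tfrac{\tau}{\epsilon}u_{\boldsymbol{j}}^n\right)\big/\left(1 + \tfrac{\tau}{\epsilon}(u_{\boldsymbol{j}}^n + v_{\boldsymbol{j}}^n)\right)$, whose numerator is nonnegative and whose denominator is at least $1$, so $p_{\boldsymbol{j}}^{n+1}\geq 0$; bounding the numerator by $1 + \tfrac{\tau}{\epsilon}u_{\boldsymbol{j}}^n$ and using $v_{\boldsymbol{j}}^n\geq 0$ gives $p_{\boldsymbol{j}}^{n+1}\leq 1$, exactly as in \eqref{semi bound:p}. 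No boundary data enters this step.

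For $u^{n+1}$ I would collect \eqref{fully:u1} over the interior nodes into a linear system $A_u\,\mathbf{u}^{n+1} = \mathbf{b}_u$, where $\mathbf{u}^{n+1}$ lists the interior values, the diagonal entry of $A_u$ at node $\boldsymbol{j}$ is $1 + \tfrac{\tau}{\epsilon}\left(v_{\boldsymbol{j}}^n + \lambda(1-p_{\boldsymbol{j}}^{n+1})\right) + 2d\,d_1\tfrac{\tau}{h^2}$, each off-diagonal entry associated with an interior neighbor equals $-d_1\tfrac{\tau}{h^2}$, and $\mathbf{b}_u$ is $u_{\boldsymbol{j}}^n$ augmented by $d_1\tfrac{\tau}{h^2}$ times the sum of the boundary values at the neighboring boundary nodes. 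The structural point is that, by the induction hypothesis ($v_{\boldsymbol{j}}^n\geq 0$) together with the bound $0\leq p_{\boldsymbol{j}}^{n+1}\leq 1$ just proved, the diagonal entry exceeds the sum of the absolute values of the off-diagonal entries in that row by at least $1$; since in addition the diagonal is positive and the off-diagonals are nonpositive, $A_u$ is a strictly diagonally dominant $M$-matrix, hence invertible and monotone, i.e. $A_u^{-1}\geq 0$ entrywise (equivalently, a discrete maximum principle holds; this is the discrete counterpart of "all eigenvalues of $\mathcal{L}^{n+1}$ are at least $1$" in Lemma \ref{lemma:bp}). Since $\mathbf{b}_u\geq 0$ by the induction hypothesis and the boundary assumption, $\mathbf{u}^{n+1} = A_u^{-1}\mathbf{b}_u\geq 0$.

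For the upper bound I would test against the comparison vector $\mathbf{z} := C_u\mathbf{1} - \mathbf{u}^{n+1}$. A direct computation gives $(A_u\mathbf{1})_{\boldsymbol{j}} = 1 + \tfrac{\tau}{\epsilon}\left(v_{\boldsymbol{j}}^n + \lambda(1-p_{\boldsymbol{j}}^{n+1})\right) + k_{\boldsymbol{j}}\,d_1\tfrac{\tau}{h^2}$, where $k_{\boldsymbol{j}}$ is the number of boundary neighbors of $\boldsymbol{j}$, so that $(A_u\mathbf{z})_{\boldsymbol{j}} = C_u(A_u\mathbf{1})_{\boldsymbol{j}} - (\mathbf{b}_u)_{\boldsymbol{j}}$; using $u_{\boldsymbol{j}}^n\leq C_u$ and bounding each boundary value by $C_u$ makes the boundary contribution $(\mathbf{b}_u)_{\boldsymbol{j}}\leq C_u\left(1 + k_{\boldsymbol{j}}d_1\tfrac{\tau}{h^2}\right)$, which leaves $(A_u\mathbf{z})_{\boldsymbol{j}} \geq C_u\,\tfrac{\tau}{\epsilon}\left(v_{\boldsymbol{j}}^n + \lambda(1-p_{\boldsymbol{j}}^{n+1})\right)\geq 0$. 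Monotonicity of $A_u$ then gives $\mathbf{z} = A_u^{-1}(A_u\mathbf{z})\geq 0$, i.e. $u_{\boldsymbol{j}}^{n+1}\leq C_u$. The argument for $v^{n+1}$ from \eqref{fully:v1} is word-for-word the same once one observes that the coefficient $1 + \tfrac{\tau}{\epsilon}\left(u_{\boldsymbol{j}}^{n+1} + \lambda p_{\boldsymbol{j}}^{n+1}\right)$ is $\geq 1$ thanks to the bounds $u_{\boldsymbol{j}}^{n+1}\geq 0$ and $0\leq p_{\boldsymbol{j}}^{n+1}\leq 1$ already obtained; this closes the induction.

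The main obstacle is not the interior algebra but the careful bookkeeping of the Dirichlet terms: one must verify that transferring the boundary values to the right-hand side preserves both the strict diagonal dominance of $A_u$ (it does, since the only off-diagonal entries removed are those pointing to boundary nodes, which merely decreases the off-diagonal row sum) and the sign of $A_u\mathbf{z}$ in the comparison step — the latter being precisely where the hypothesis $0\leq\gamma_{\partial\Omega}\leq C_\gamma$ is used and where the cancellation of the $k_{\boldsymbol{j}}d_1\tfrac{\tau}{h^2}$ terms occurs. A secondary point to state cleanly is the $M$-matrix/monotonicity property for the $(2d{+}1)$-point stencil in general dimension $d$, from which the invertibility of $A_u$ and $A_v$ also follows, so that \eqref{fully:u1}--\eqref{fully:v1} define the scheme unambiguously.
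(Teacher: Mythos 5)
Your proof is correct, but it packages the discrete maximum principle differently from the paper. The paper's proof of Theorem \ref{thm:bp} is local and argues by contradiction: it supposes $u^{n+1}_{\boldsymbol{j}}>C_u$ at an interior node where the maximum is attained, evaluates \eqref{fully:u1} there, notes that the stencil differences $u_{\boldsymbol{j}_0}^{n+1}-u_{\boldsymbol{j}_0\pm 1}^{n+1}$ are nonnegative at the maximizer and that the zeroth-order coefficient is at least $1$ (thanks to $v^n\geq 0$ and $0\leq p^{n+1}\leq 1$, established exactly as in your first paragraph via \eqref{bound:p}), so the left side exceeds $C_u$ while the right side is at most $C_u$; non-negativity is handled symmetrically at the minimizer. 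You instead assemble the interior unknowns into a system $A_u\mathbf{u}^{n+1}=\mathbf{b}_u$, prove $A_u$ is a strictly diagonally dominant $Z$-matrix hence monotone, and obtain both bounds from $A_u^{-1}\geq 0$ together with the comparison vector $C_u\mathbf{1}-\mathbf{u}^{n+1}$. The two arguments rest on the same structural facts (sign pattern of the stencil plus the coefficient being at least $1$), but yours buys two things the paper's does not make explicit: the invertibility of the linear systems in \eqref{fully:u1}--\eqref{fully:v1}, so the scheme is well defined at each step, and a fully explicit accounting of how the Dirichlet data enters the right-hand side, whereas the paper's local argument is shorter and avoids any global matrix bookkeeping. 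Your ordering of the induction ($p$ first, then $u$, then $v$, using $u^{n+1}\geq 0$ and $0\leq p^{n+1}\leq 1$ to make the coefficient in the $v$-equation at least $1$) matches the paper's.
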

\begin{proof}
	Assume that $0 \leq u_{\boldsymbol{j}}^n \leq C_u$, $0 \leq v_{\boldsymbol{j}}^n \leq C_v$ and $0 \leq p_{\boldsymbol{j}}^n \leqslant 1$ for any $\boldsymbol{j}$. 
	\eqref{fully:p1} leads that
	\begin{align}
		0 \leqslant 
		p_{\boldsymbol{j}}^{n+1} = \frac{p_{\boldsymbol{j}}^n + \frac{\tau}{\epsilon} u_{\boldsymbol{j}}^n}{1 + \frac{\tau}{\epsilon} (u_{\boldsymbol{j}}^n + v_{\boldsymbol{j}}^n)} 
		\leqslant 
		\frac{1 + \frac{\tau}{\epsilon} u_{\boldsymbol{j}}^n}{1 + \frac{\tau}{\epsilon} u_{\boldsymbol{j}}^n + \frac{\tau}{\epsilon} v_{\boldsymbol{j}}^n} 
		\leq 1. \label{bound:p}
	\end{align}
	
	Assume that $u_{\boldsymbol{j}}^{n+1} > C_u$ for some ${\boldsymbol{j}}$ and $u_{\boldsymbol{j}}^{n+1}$ takes the maximum at ${\boldsymbol{j}} = {\boldsymbol{j}}_0$ with $x_{\boldsymbol{j}_0} \notin \partial \Omega$. Taking ${\boldsymbol{j}} = {\boldsymbol{j}}_0$ in \eqref{fully:u1}, we find that
	\begin{align}
		\left[1 + \frac{\tau}{\epsilon} (v_{\boldsymbol{j}_0}^n + \lambda (1 - p_{\boldsymbol{j}_0}^{n+1})) - d_1 \tau \Delta_h \right] u_{\boldsymbol{j}_0}^{n+1} &= u_{\boldsymbol{j}_0}^n.
		\nonumber
	\end{align}
	When $d = 1$, one has 
	\begin{align}
		\left[1 + \frac{\tau}{\epsilon} \left(v_{j_0}^n + \lambda \left(1 - p_{j_0}^{n+1}\right)\right)\right] u_{j_0}^{n+1} + d_1 \frac{\tau}{h^2} \left(u_{j_0}^{n+1} - u_{j_0+1}^{n+1}\right) + d_1 \frac{\tau}{h^2} \left(u_{j_0}^{n+1} - u_{j_0-1}^{n+1}\right) = u_{j_0}^n. \label{fully:u1 j0}
	\end{align}
	Then $1 + \frac{\tau}{\epsilon} (v_{j_0}^n + \lambda (1 - p_{j_0}^{n+1})) \geq 1$ for the sake of \eqref{bound:p} and the assumption $v_j^n \geq 0$ for any $j$.
	Hence the left side of \eqref{fully:u1 j0} is larger than $C_u$ while the right side $u_{j_0}^n \leq C_u$, which leads to a contradictory. Thus for Dirichlet boundary, we can conclude that $u_j^{n+1} \leq u_{j_0}^{n+1} \leq C_u$. 

	Similarly, assume that $u_j^{n+1} < 0$ for some $j$ and $u_j^{n+1}$ takes the minimum at $j = j_0$. Taking $j = j_0$ in \eqref{fully:u1}, one has the left side of \eqref{fully:u1 j0} is negative while the right side is non-negative. Thus we conclude that the semi-implicit scheme preserves non-negativity.
	
	The proofs for the non-negativity and upper-bound preserving property of $v$ and for $d = 2, \cdots,$ are similar to $u$ in one-dimension, so we omit them.
\end{proof}

Let $U^n = \left(u_{\boldsymbol{j}}^n\right)^\prime$, $\|U^n\|_2 = \left(\sum_{\boldsymbol{j}, j_i \neq 0, i = 1, \cdots, d} h \left(u_{\boldsymbol{j}}^n\right)^2\right)^{\frac{1}{2}}$ 
and the same is true of $v$. Then we have the fully discrete $L^2$ stability property for $u$ and $v$ with Dirichlet boundary condition \eqref{bc_fully}. 

\begin{theorem}[$L^2$ stability]
	For Dirichlet boundary condition \eqref{bc_fully}, one has 
	$$
	\|U^{n+1}\|^2_{2} - \|U^{n}\|^2_{2} \leq h \sum_{\boldsymbol{j}, j_i = N, i = 1, \cdots, d} \left[ 
	\left({u_{\partial \Omega}}_{\boldsymbol{j}}^{n+1}\right)^2 - \left({u_{\partial \Omega}}_{\boldsymbol{j}}^{n}\right)^2 \right]
	+ 2 d_1 \frac{\tau}{h} \sum_{\boldsymbol{j}, j_i = 0, N, i = 1, \cdots, d} \left( {u_{\partial \Omega}}_{\boldsymbol{j}}^{n+1} \right)^2,
	$$
	and 
	$$
	\|V^{n+1}\|^2_{2} - \|V^{n}\|^2_{2} \leq h \sum_{\boldsymbol{j}, j_i = N, i = 1, \cdots, d} \left[ \left({v_{\partial \Omega}}_{\boldsymbol{j}}^{n+1}\right)^2 - \left({v_{\partial \Omega}}_{\boldsymbol{j}}^{n}\right)^2 \right]
	+ 2 d_2 \frac{\tau}{h} \sum_{\boldsymbol{j}, j_i = 0, N, i = 1, \cdots, d} \left( {v_{\partial \Omega}}_{\boldsymbol{j}}^{n+1} \right)^2.
	$$
	\label{thm:l2s}
\end{theorem}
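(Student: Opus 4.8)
The plan is a discrete energy estimate run on the reformulated scheme \eqref{fully:u1}--\eqref{fully:v1}, entirely parallel to the semi-discrete computation in Lemma \ref{lemma:l2s}, but now the Dirichlet data produce the two boundary contributions appearing on the right-hand side. I will treat $U$; the estimate for $V$ is identical after replacing $d_1$ by $d_2$ and the reaction coefficient $v_{\boldsymbol{j}}^n+\lambda(1-p_{\boldsymbol{j}}^{n+1})$ by $u_{\boldsymbol{j}}^{n+1}+\lambda p_{\boldsymbol{j}}^{n+1}$. First I would split the norm increment $\|U^{n+1}\|_2^2-\|U^n\|_2^2$ according to whether a node is interior or lies on the face $\{j_i=N\}$: those boundary nodes are not governed by \eqref{fully:u1} but simply carry the prescribed value ${u_{\partial\Omega}}_{\boldsymbol{j}}^{n+1}$, so peeling them off produces precisely the first term $h\sum_{j_i=N}[({u_{\partial\Omega}}_{\boldsymbol{j}}^{n+1})^2-({u_{\partial\Omega}}_{\boldsymbol{j}}^{n})^2]$ and leaves a sum over genuine interior nodes. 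On the interior I would use the elementary identity $a^2-b^2=2a(a-b)-(a-b)^2\le 2a(a-b)$ with $a=u_{\boldsymbol{j}}^{n+1}$, $b=u_{\boldsymbol{j}}^{n}$, which reduces matters to bounding $2h\sum_{\mathrm{int}}u_{\boldsymbol{j}}^{n+1}(u_{\boldsymbol{j}}^{n+1}-u_{\boldsymbol{j}}^{n})$ from above.

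Next I would substitute $u_{\boldsymbol{j}}^{n+1}-u_{\boldsymbol{j}}^{n}=d_1\tau\,\Delta_h u_{\boldsymbol{j}}^{n+1}-\tfrac{\tau}{\epsilon}(v_{\boldsymbol{j}}^n+\lambda(1-p_{\boldsymbol{j}}^{n+1}))\,u_{\boldsymbol{j}}^{n+1}$ from \eqref{fully:u1}. The reaction part equals $-\tfrac{2\tau}{\epsilon}h\sum_{\mathrm{int}}(v_{\boldsymbol{j}}^n+\lambda(1-p_{\boldsymbol{j}}^{n+1}))(u_{\boldsymbol{j}}^{n+1})^2$, and Theorem \ref{thm:bp} gives $v_{\boldsymbol{j}}^n\ge 0$ and $p_{\boldsymbol{j}}^{n+1}\le 1$, so the coefficient is nonnegative and this whole contribution is $\le 0$ and is discarded. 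For the diffusion part $2d_1\tau\,h\sum_{\mathrm{int}}u_{\boldsymbol{j}}^{n+1}\Delta_h u_{\boldsymbol{j}}^{n+1}$ I would apply discrete summation by parts (the discrete Green identity for the $(2d+1)$-point operator $\Delta_h$), direction by direction: this produces minus a sum of squared forward differences, which is nonpositive and dropped, together with a sum of boundary flux terms supported on the faces $\{j_i=0\}$ and $\{j_i=N\}$, each flux being a product of a boundary value ${u_{\partial\Omega}}_{\boldsymbol{j}}^{n+1}$ with a one-sided difference involving its interior neighbour. A completing-the-square / Young estimate on each such flux, absorbing the small leftover pieces of the negative nearest-boundary squared-difference terms, then bounds the entire diffusion contribution by $2d_1\tfrac{\tau}{h}\sum_{j_i=0,N}({u_{\partial\Omega}}_{\boldsymbol{j}}^{n+1})^2$ — in fact with a strictly smaller constant, which one simply relaxes to $2$. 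Collecting the three pieces gives the asserted inequality, and the computation for $V$ is verbatim.

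I expect the boundary bookkeeping to be the main obstacle. The weighted discrete norm here is asymmetric — it includes the face $\{j_i=N\}$ but excludes $\{j_i=0\}$ — so the node-peeling step must be matched exactly against the boundary terms generated by summation by parts; and in dimension $d\ge 2$ one must be careful that the interior stencil \eqref{fully:u1} holds only when \emph{every} coordinate of $\boldsymbol{j}$ is interior, so that a node with even a single coordinate on $\partial\Omega$ is a boundary node carrying the given datum, and the summation by parts has to be organised face by face (with corner nodes collecting a contribution from each incident face) so that each datum enters with the correct coefficient. Once the boundary terms are correctly accounted for, the remaining ingredients — the sign of the reaction term via Theorem \ref{thm:bp}, nonpositivity of the discrete Dirichlet energy, and the elementary Young inequalities — are routine.
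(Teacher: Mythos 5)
Your proposal is correct and follows essentially the same route as the paper's proof: multiply the scheme by $h u_{\boldsymbol{j}}^{n+1}$ and sum over interior nodes, peel off the face $\{j_i=N\}$ carried by the asymmetric norm, use $u^n u^{n+1}\le \tfrac12 (u^n)^2+\tfrac12 (u^{n+1})^2$ (your identity $a^2-b^2\le 2a(a-b)$ is the same step), discard the nonpositive reaction term via Theorem \ref{thm:bp}, and sum the diffusion term by parts, dropping the squared-difference sum and bounding the boundary fluxes. Your explicit Young/absorption argument for the boundary flux terms is in fact a welcome clarification of the paper's terser ``dropping the non-positive parts in $T_d$,'' but it is not a different method.
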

\begin{proof}
	Multiplying $h u_{\boldsymbol{j}}^{n+1}$ to \eqref{fully:u1} and summing $\boldsymbol{j}$  in the set $\mathcal{J} = \left\{ \boldsymbol{j} | j_i \neq 0, N, i = 1, \cdots, d \right\}$, one has 
	\begin{align}
		h \sum_{\boldsymbol{j} \in \mathcal{J}} \left[1 + \frac{\tau}{\epsilon} \left(v_{\boldsymbol{j}}^n + \lambda \left(1 - p_{\boldsymbol{j}}^{n+1}\right)\right) - d_1 \tau \Delta_h \right] u_{\boldsymbol{j}}^{n+1} u_{\boldsymbol{j}}^{n+1} &= h \sum_{\boldsymbol{j} \in \mathcal{J}} u_{\boldsymbol{j}}^{n+1} u_{\boldsymbol{j}}^n,
		\nonumber
	\end{align}
	which leads that
	\begin{align}
		\frac{1}{2} \|U^{n+1}\|_2^2 - \frac{1}{2} \|U^{n}\|_2^2 
		&\leq 
		\frac{1}{2} h \sum_{\boldsymbol{j}, j_i = N, i = 1, \cdots, d} \left[ \left(u_{\boldsymbol{j}}^{n+1}\right)^2 - \left(u_{\boldsymbol{j}}^{n}\right)^2 \right]
		+ h \sum_{\boldsymbol{j} \in \mathcal{J}} \left[ -\frac{\tau}{\epsilon} \left(v_{\boldsymbol{j}}^n + \lambda \left(1 - p_{\boldsymbol{j}}^{n+1}\right)\right) + d_1 \tau \Delta_h \right] u_{\boldsymbol{j}}^{n+1} u_{\boldsymbol{j}}^{n+1} \nonumber \\
		&= 
		\frac{1}{2} h \sum_{\boldsymbol{j}, j_i = N, i = 1, \cdots, d} \left[ \left(u_{\boldsymbol{j}}^{n+1}\right)^2 - \left(u_{\boldsymbol{j}}^{n}\right)^2 \right]
		+ T_r + T_d, \nonumber 
	\end{align}
	with the reaction term $T_r$ and the diffusion term $T_d$ being
	\begin{align}
		T_r &:= -\frac{\tau h}{\epsilon} \sum_{\boldsymbol{j} \in \mathcal{J}} \left(v_{\boldsymbol{j}}^n + \lambda \left(1 - p_{\boldsymbol{j}}^{n+1}\right)\right) \left(u_{\boldsymbol{j}}^{n+1}\right)^2, \nonumber \\
		T_d &:= d_1 \tau h \sum_{\boldsymbol{j} \in \mathcal{J}} u_{\boldsymbol{j}}^{n+1} \Delta_h u_{\boldsymbol{j}}^{n+1}. \nonumber
	\end{align}
	Here the reaction term $T_r \leq 0$ due to Theorem \ref{thm:bp} and the diffusion term 
	\begin{align}
		T_d &= d_1 \tau \sum_{m = 1}^d \sum_{                                                                   \boldsymbol{j} \in \mathcal{J}} u_{\boldsymbol{j}}^{n+1} \left( \nabla_{h, m} u_{\boldsymbol{j}}^{n+1} - \nabla_{h, m} u_{\boldsymbol{j}-\boldsymbol{e}                                                                               _m}^{n+1} \right) \nonumber \\
		&= - d_1 \tau h \sum_{m = 1}^d \sum_{                                                                   \boldsymbol{j}} \left( \nabla_{h, m} u_{\boldsymbol{j}}^{n+1} \right)^2 
		+  d_1 \tau \sum_{m = 1}^d \sum_{                                                                   \boldsymbol{j}, j_i = N, i = 1, \cdots, d}  u_{\boldsymbol{j}}^{n+1}  \nabla_{h, m} u_{\boldsymbol{j}-\boldsymbol{e}                                                                               _m}^{n+1} 
		-  d_1 \tau \sum_{m = 1}^d \sum_{                                                                   \boldsymbol{j}, j_i = 0, i = 1, \cdots, d}  u_{\boldsymbol{j}}^{n+1}  \nabla_{h, m} u_{\boldsymbol{j}}^{n+1},
		\nonumber
	\end{align}
	where the $m$-th component of the vector $\boldsymbol{e}_m$ is 1 and the rest is 0,  and $$\nabla_{h, m} u_{\boldsymbol{j}}^{n+1} = \frac{1}{h} \left( u_{\boldsymbol{j} + \boldsymbol{e}_m}^{n+1} - u_{\boldsymbol{j}}^{n+1} \right).$$
	Then by dropping the non-positive parts in $T_d$, one gets 
	\begin{align}
		\frac{1}{2} \|U^{n+1}\|_2^2 - \frac{1}{2} \|U^{n}\|_2^2 	
		&\leq \frac{1}{2} h \sum_{\boldsymbol{j}, j_i = N, i = 1, \cdots, d} \left[ \left(u_{\boldsymbol{j}}^{n+1}\right)^2 - \left(u_{\boldsymbol{j}}^{n}\right)^2 \right]
		+ d_1 \frac{\tau}{h} \sum_{\boldsymbol{j}, j_i = 0, N, i = 1, \cdots, d} \left( u_{\boldsymbol{j}}^{n+1} \right)^2.  
		\nonumber
	\end{align}
\end{proof}

\section{High order semi-implicit Runge--Kutta scheme} 
\label{sec: rk}
In this part, we consider a high order semi-implicit Runge--Kutta scheme which is used in the case that the stiff terms don't appear in partitioned or additive form and the stiffness is associated to some variables \cite{boscarino2016high}. 
In order to apply this idea, we rewrite the stiff system \eqref{eq:u}-\eqref{eq:p} as 
\begin{align}
	\partial_t u - d_1 \Delta \overline{u} &= - \frac{1}{\epsilon} \overline{u} \left[v + \lambda \left(1 - p\right)\right], ~~x \in \mathbb{R}^d, ~~t>0,  \label{rk eq:u} \\
	\partial_t v - d_2 \Delta \overline{v} &= - \frac{1}{\epsilon} \overline{v} \left(u + \lambda p\right), \label{rk eq:v} \\
	\partial_t p &= \frac{1}{\epsilon} \left(\overline{u} - u \overline{p} - v \overline{p}\right), \label{rk eq:p}
\end{align}
where $\overline{u} = u$, $\overline{v} = v$ and $\overline{p} = p$. 
Define the vector $y = (u, v, p)^\prime$ and $z = (\overline{u}, \overline{v}, \overline{p})^\prime$, then the equations \eqref{rk eq:u}-\eqref{rk eq:p} can be written in the following form,
\begin{align}
	\partial_t y &= \mathcal{H}(y, z), \nonumber \\
	\partial_t z &= \mathcal{H}(y, z), \nonumber
\end{align}
with the right hand side 
\begin{align}
	\mathcal{H}(y, z) := \left[
	\begin{array}{c}
		d_1 \Delta \overline{u} - \frac{1}{\epsilon} \overline{u} \left[v + \lambda (1 - p)\right] \\
		d_2 \Delta \overline{v}(t) - \frac{1}{\epsilon} \overline{v} (u + \lambda p) \\
		\frac{1}{\epsilon} \left(\overline{u} - u \overline{p} - v \overline{p}\right) \\
	\end{array}
	\right]. \nonumber
\end{align}
The diffusion term is treated implicitly as it induces some stiffness, and $z$ is treated implicitly while $y$ is treated explicitly.  
Set $z^n = y^n$ and compute the stage fluxes for $i = 1, \cdots, s$,  
\begin{align}
	Y_i &= y^n + \tau \sum_{j = 1}^{i - 1} \hat{a}_{ij} k_j, ~~i = 2, \cdots, s, \label{rk:y} \\
	\tilde{Z}_i &= y^n + \tau \sum_{j = 1}^{i - 1} a_{ij} k_j, ~~i = 2, \cdots, s, \label{rk:z}\\
	k_i &= \mathcal{H}\left(Y_i,  \tilde{Z}_i + \tau a_{ii} k_i \right), ~~i = 1, \cdots, s, \label{rk:k}
\end{align}
with $Y_1 = \tilde{Z}_1 = y^n$.
Then the solution $y^{n+1} = y^n + \tau \sum_{i = 1}^{s} \hat{b}_i k_i$. 
Hence combined with proper discretization in space, an high order semi-implicit Runge--Kutta scheme is proposed.

In this part, we adopt the 2-stage second-order semi-implicit Runge--Kutta scheme whose double Butcher tableau is 
\begin{equation}
	\begin{tabular}{l|c}
		$\boldsymbol{\hat{c}}$ & $\boldsymbol{\hat{A}}$\\ \hline
		& $\boldsymbol{\hat{b}}$
	\end{tabular},\qquad 
	\begin{tabular}{l|c}
		$\boldsymbol{c}$ & $\boldsymbol{A}$\\ \hline
		& $\boldsymbol{b}$
	\end{tabular},
	\nonumber
\end{equation}
with the strictly lower-triangular matrix
\[
\boldsymbol{\hat{A}} = \left[
\begin{array}{cc}
	\hat{a}_{11} & \hat{a}_{12} \\
	\hat{a}_{21} & \hat{a}_{22}
\end{array}
\right] = \left[
\begin{array}{cc}
	0 & 0 \\
	c_0 & 0
\end{array}
\right],
\]
and the lower-triangular matrix
\[
\boldsymbol{A} = \left[
\begin{array}{cc}
	\hat{a}_{11} & \hat{a}_{12} \\
	\hat{a}_{21} & \hat{a}_{22}
\end{array}
\right] = \left[
\begin{array}{cc}
	\gamma & 0 \\
	1-\gamma & \gamma
\end{array}
\right],
\]
with $\gamma = 1 - 1/\sqrt{2}$ and $c_0 = 1/(2 \gamma)$, the vectors 
\[
\boldsymbol{b} = \boldsymbol{\hat{b}} = \left[\hat{b}_1, \hat{b}_2\right]^\prime = \left[1-\gamma, \gamma \right]^\prime,
\]
and the vectors $\boldsymbol{\hat{c}} = \left[\hat{c}_1,\hat{c}_2\right]^\prime =\left[0,c_0\right]^\prime$, $\boldsymbol{c} = \left[c_1, c_2\right]^\prime = \left[\gamma, 1\right]^\prime$, which is obtained from $\hat{c}_i = \sum_{j=1}^{i-1} \hat{a}_{ij}$ and $c_i = \sum_{j=1}^i a_{ij}$.
We remark that a general $s$-stage semi-implicit Runge--Kutta scheme can also be considered. 
Next, we give the calculations of the semi-implicit Runge--Kutta scheme \eqref{rk:y}-\eqref{rk:k} if the solution $y^n$ is given.

\begin{itemize}
	\item Stage 1. \\
	The scheme \eqref{rk:y}-\eqref{rk:k} becomes
	\begin{align}
		Y_1 &= y^n, \nonumber \\
		\tilde{Z}_1 &= y^n, \nonumber \\
		k_1 &= \mathcal{H}\left(Y_1, \tilde{Z}_1 + \tau a_{11} k_1 \right). \nonumber 
	\end{align}
	Then one has 
	\begin{align}
		M_{1} k_{1} = f_{1}, \nonumber
	\end{align}
	where 
	\begin{align}
		k_{1} = \left[ 
		\begin{array}{c}
			k_{11} \\
			k_{12} \\
			k_{13}
		\end{array}
		\right],  \quad
		f_{1} 
		= \left[
		\begin{array}{c}
			- \frac{1}{\epsilon} \tilde{Z}_{11} [Y_{12} + \lambda (1 - Y_{13})] + d_1 \Delta_h \tilde{Z}_{11} \\
			- \frac{1}{\epsilon} \tilde{Z}_{12} (Y_{11} + \lambda Y_{13}) + d_2 \Delta_h  \tilde{Z}_{12} \\
			\frac{1}{\epsilon} \tilde{Z}_{11} - \frac{1}{\epsilon} (Y_{11} + Y_{12}) \tilde{Z}_{13}
		\end{array}
		\right], \nonumber
	\end{align}
	and
	\begin{align}
		M_{1}
		= \left[
		\begin{array}{ccc}
			1 + \frac{1}{\epsilon} \tau a_{11} [Y_{12} + \lambda (1 - Y_{13})] - d_1 \tau a_{11} \Delta_h & 0 & 0\\
			0 & 1 + \frac{1}{\epsilon} \tau a_{11} (Y_{11} + \lambda Y_{13}) - d_2 \tau a_{11} \Delta_h & 0\\
			\frac{1}{\epsilon} \tau a_{11} & 0 & 1 + \frac{1}{\epsilon} \tau a_{11} (Y_{11} + Y_{12})
		\end{array}
		\right], \nonumber
	\end{align}
	with $Y_1 = \left(Y_{11}, Y_{12}, Y_{13}\right)^\prime$ and $\tilde{Z}_1 = \left(\tilde{Z}_{11}, \tilde{Z}_{12}, \tilde{Z}_{13}\right)^\prime$.\\
	\item Stage 2. \\
	\begin{align}
		Y_2 &= y^n + \tau \hat{a}_{21} k_1,  \nonumber \\
		\tilde{Z}_2 &= y^n + \tau a_{21} k_1, \nonumber \\
		k_2 &= \mathcal{H} \left(Y_2, \tilde{Z}_2 + \tau a_{22} k_2\right). \nonumber
	\end{align}
	Then one has 
	\begin{align}
		M_{2} k_{2} = f_{2}, \nonumber
	\end{align}
	where 
	\begin{align}
		k_{2} = \left[ 
		\begin{array}{c}
			k_{21} \\
			k_{22} \\
			k_{23}
		\end{array}
		\right],  \quad
		f_{2} 
		= \left[
		\begin{array}{c}
			- \frac{1}{\epsilon} \tilde{Z}_{21} [Y_{22} + \lambda (1 - Y_{23})] + d_1 \Delta_h \tilde{Z}_{21} \\
			- \frac{1}{\epsilon} \tilde{Z}_{22} (Y_{21} + \lambda Y_{23}) + d_2 \Delta_h  \tilde{Z}_{22} \\
			\frac{1}{\epsilon} \tilde{Z}_{21} - \frac{1}{\epsilon} (Y_{21} + Y_{22}) \tilde{Z}_{23}
		\end{array}
		\right], \nonumber
	\end{align}
	and
	\begin{align}
		M_{2}
		= \left[
		\begin{array}{ccc}
			1 + \frac{1}{\epsilon} \tau a_{22} [Y_{22} + \lambda (1 - Y_{23})] - d_1 \tau a_{22} \Delta_h & 0 & 0\\
			0 & 1 + \frac{1}{\epsilon} \tau a_{22} (Y_{21} + \lambda Y_{23}) - d_2 \tau a_{22} \Delta_h & 0\\
			\frac{1}{\epsilon} \tau a_{22} & 0 & 1 + \frac{1}{\epsilon} \tau a_{22} (Y_{21} + Y_{22})
		\end{array}
		\right], \nonumber
	\end{align}
	with $Y_2 = \left(Y_{21}, Y_{22}, Y_{23}\right)^\prime$ and $\tilde{Z}_2 = \left(\tilde{Z}_{21}, \tilde{Z}_{22}, \tilde{Z}_{23}\right)^\prime$.\\
	
	Hence the solution of $y$ at $t_{n+1}$ is given by
	\begin{align}
		y^{n+1} = y^n + \tau \left(\hat{b}_1 k_1 + \hat{b}_2 k_2 \right).  \nonumber
	\end{align}
\end{itemize}

We mention again that such a semi-implicit Runge--Kutta scheme in Section \ref{sec: rk} is practically of second-order accuracy in both time and space and numerical tests on the convergence order in time will be carried out in next Section.

\section{Numerical tests} 
In this section, we provide various numerical tests to verify the accuracy of the first- and second-order schemes in Section \ref{sec: si} and Section \ref{sec: rk}. 
Besides the order of accuracy, we also observe the non-negativity and bound preserving property, and the capture of the sharp interface with various $\epsilon$ on the numerical side. 
At last we simulate the dynamics of the substances and the substrate, the liquid solidification process, etc.
\subsection{Convergence tests} 
The errors of numerical solutions $\{\gamma^n_{\boldsymbol{j}}\}$ for $\gamma = u, v, p, w$ are computed as follows, 
\begin{align}
	&{e_\gamma}_{\tau,h}^{\infty} := \max_{\boldsymbol{j}} |\gamma^n_{\boldsymbol{j}} - \gamma^{\text{ref}}(\boldsymbol{x}_{\boldsymbol{j}}, t_n)|, \nonumber\\
	&{e_\gamma}_{\tau,h}^{1} := h \sum_{\boldsymbol{j}} |\gamma^n_{\boldsymbol{j}} - \gamma^{\text{ref}}(\boldsymbol{x}_{\boldsymbol{j}}, t_n)|, \nonumber \\
	&{e_\gamma}_{\tau,h}^{2} := \left[h \sum_{\boldsymbol{j}} |\gamma^n_{\boldsymbol{j}} - \gamma^{\text{ref}}(\boldsymbol{x}_{\boldsymbol{j}}, t_n)|^2\right]^{\frac{1}{2}}, \nonumber
\end{align}
where $j=(j_1, \cdots, j_d)^\prime$, $\boldsymbol{x}_{\boldsymbol{j}} = ({x_1}_{j_1}, \cdots, {x_d}_{j_d})^\prime$, $h$ is the spatial mesh size, $\tau$ is the temporal mesh size, and $\gamma^{\text{ref}}$ is the reference solution which is obtained by small enough $h$ and $\tau$.

\subsubsection{Convergence test for the first-order semi-implicit scheme}
{\bf Case 1: a one-dimensional test with $\epsilon \sim O(1)$.} Consider the fast reaction--diffusion system \eqref{eq:u}-\eqref{eq:p}. We first apply the first-order semi-implicit scheme to such model with the coefficients $\epsilon = 1$ and the initial condition \eqref{i.c.} and the Dirichlet boundary condition \eqref{Dirichlet} given in the following form
\begin{align}
	&v(x, 0) = \cos\left(\frac{\pi x}{2 L}\right) + \epsilon_0, \quad u(x, 0) = 1 - v + \epsilon_0, \quad p(x, 0) = \frac{x}{2L} + \frac{1}{2}, \quad \epsilon_0 = 10^{-8}, \quad x \in [-L, L], \nonumber\\
	&u(-L, t) = u(L, t) = 1, \quad v(-L, t) = v(L, t) = \epsilon_0. \nonumber
\end{align}

In the first test, we verify the accuracy order of our scheme in both space and time. The computation domain is taken as $[-L, L], L = 1$, the results of the first-order convergence in time at $t = 0.1$ are shown in Figure \ref{fig:con_eps0_1D} (left) and Table \ref{tab:con_eps0_1D_t}, where we take the time step size $\tau = 2^{-j} \tau_0, j = 0, 1, \cdots, 9, 10$ with $\tau_0 = 0.002$ and the spatial mesh size $h = 1.25 \times 10^{-2}$. In this test, the reference solution is obtained by taking $h = 1.25 \times 10^{-2}$ and $\tau = 2^{-13} \tau_0$. Then Figure \ref{fig:con_eps0_1D} (Right) and Table \ref{tab:con_eps0_1D_x} show the second order convergence in space, where we take the uniform mesh size $h = 2^{-j} h_0, j = 0, 1, \cdots, 5, 6$ with $h_0 = 4 \times 10^{-2}$ and $\tau = 6.25 \times 10^{-3}$. In this test, the solution on mesh with mesh size $h = 2^{-8} h_0,  \tau = 6.25 \times 10^{-3}$ is regarded as the reference solution.
\begin{figure}[htp]
	\centering
	\includegraphics[width=8cm,height=7cm]{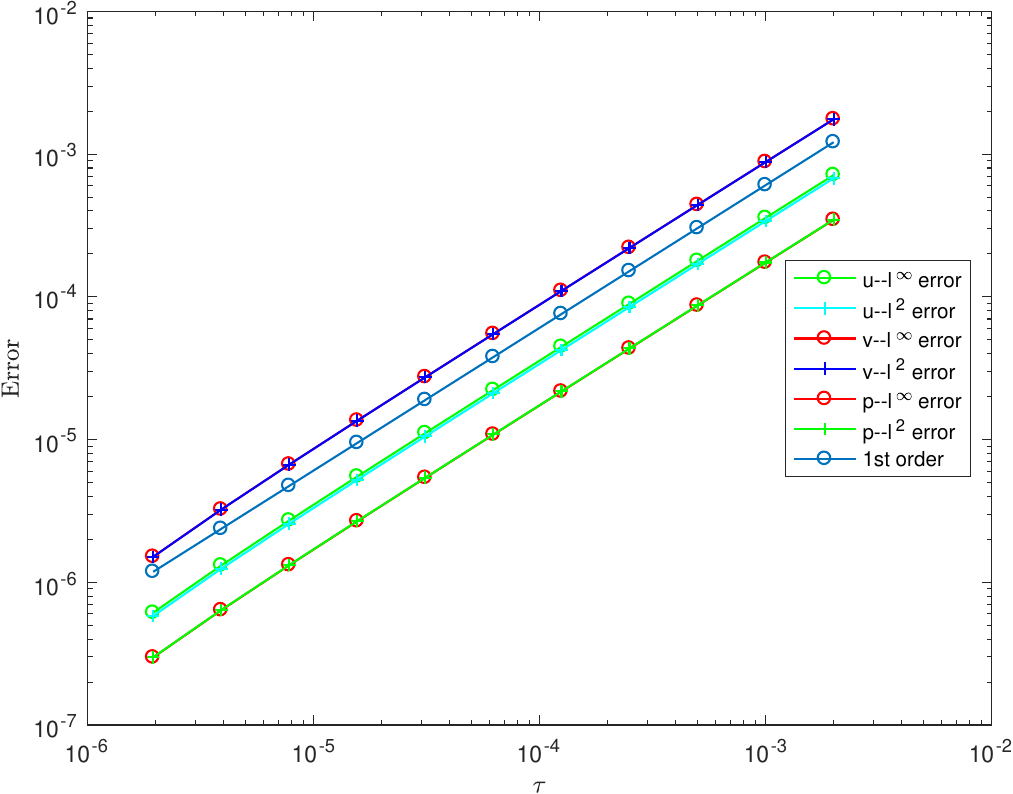}
	\includegraphics[width=8cm,height=7cm]{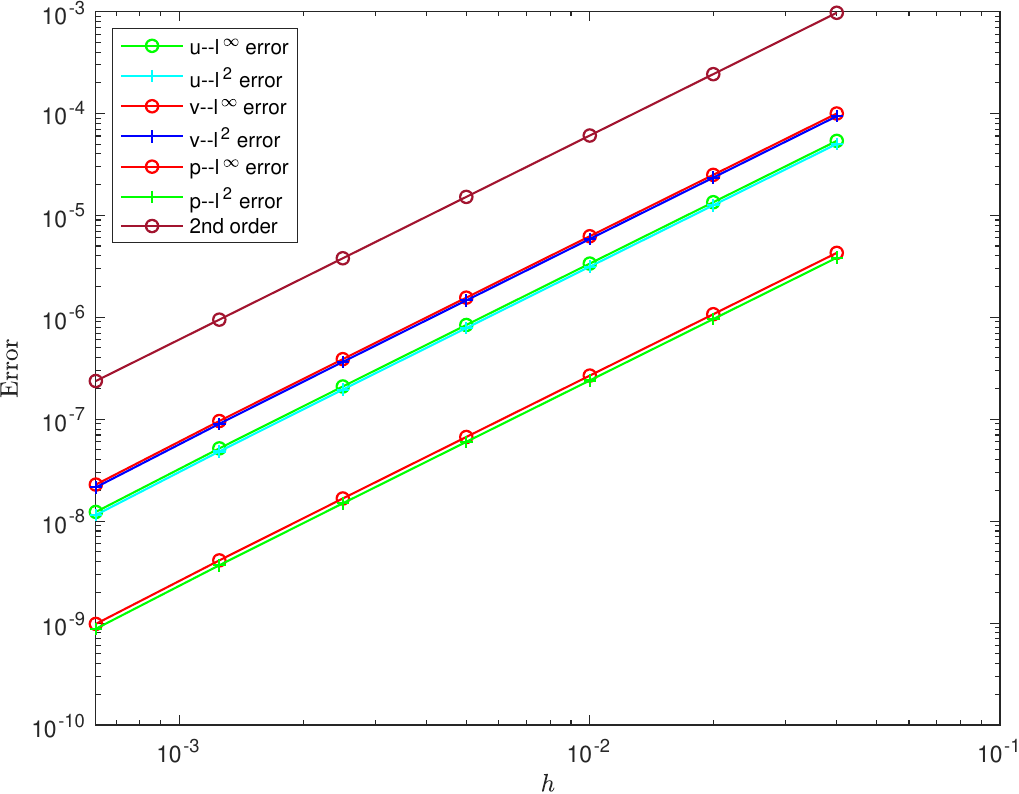}
	\caption{Left: First-order semi-implicit scheme: accuracy order check in $\tau$ with $\epsilon = 1$ and $h = 1.25 \times 10^{-2}, \tau = 2^{-j} \tau_0, j = 0, 1, \cdots, 9, 10$. Right: First-order semi-implicit scheme: accuracy order check in $h$ with $\epsilon = 1$ and $\tau = 6.25 \times 10^{-3}, h = 2^{-j} h_0, j = 0, 1, \cdots, 5, 6$.}
	\label{fig:con_eps0_1D}
\end{figure}

\begin{table}[htbp]
	\centering
	\caption{First-order semi-implicit scheme: accuracy order check in $\tau$ with $\epsilon = 1$, $h = 1.25 \times 10^{-2}$}
	\label{tab:con_eps0_1D_t}
	\begin{tabular}{|p{1.8cm}|p{1.8cm}|p{1.8cm}|p{1.8cm}|p{1.8cm}|}
		\hline
		$\tau$ & ${e_u}_{\tau,h}^{\infty}$ & order & ${e_u}_{\tau,h}^{2}$ & order \\
		\hline
		1.9531e-05 &  6.1239e-07 &         &  5.8105e-07 &    \\
		3.9063e-05 &  1.3123e-06 &  1.0995 &  1.2451e-06 &  1.0995\\
		7.8125e-05 &  2.7120e-06 &  1.0473 &  2.5732e-06 &  1.0473\\
		1.5625e-04 &  5.5114e-06 &  1.0231 &  5.2294e-06 &  1.0231\\
		3.1250e-04 &  1.1110e-05 &  1.0114 &  1.0542e-05 &  1.0114\\
		6.2500e-04 &  2.2306e-05 &  1.0056 &  2.1165e-05 &  1.0056\\
		1.2500e-03 &  4.4696e-05 &  1.0027 &  4.2409e-05 &  1.0027\\
		2.5000e-03 &  8.9460e-05 &  1.0011 &  8.4884e-05 &  1.0011\\
		5.0000e-03 &  1.7893e-04 &  1.0001 &  1.6979e-04 &  1.0001\\
		1.0000e-02 &  3.5766e-04 &  0.99917 &  3.3939e-04 &  0.99923\\
		1.0000e-02 &  7.1423e-04 &  0.99780 &  6.7781e-04 &  0.99793\\
		\hline
	\end{tabular}
\end{table}

\begin{table}[htbp]
	\centering
	\caption{First-order semi-implicit scheme: accuracy order check in $h$ with $\epsilon = 1$, $\tau = 6.25 \times 10^{-3}$}
	\label{tab:con_eps0_1D_x}
	\begin{tabular}{|p{1.8cm}|p{1.8cm}|p{1.8cm}|p{1.8cm}|p{1.8cm}|}
		\hline
		$h$ & ${e_u}_{\tau,h}^{\infty}$ & order & ${e_u}_{\tau,h}^{2}$ & order \\
		\hline
		6.2500e-04 &  1.2312e-08 &         &  1.1487e-08 & 	\\
		1.2500e-03 &  5.1710e-08 &  2.0704 &  4.8248e-08 &  2.0704\\
		2.5000e-03 &  2.0930e-07 &  2.0171 &  1.9529e-07 &  2.0171\\
		5.0000e-03 &  8.3967e-07 &  2.0042 &  7.8345e-07 &  2.0042\\
		1.0000e-02 &  3.3611e-06 &  2.0010 &  3.1361e-06 &  2.0010\\
		2.0000e-02 &  1.3444e-05 &  1.9999 &  1.2546e-06 &  2.0002\\
		4.0000e-02 &  5.3758e-05 &  1.9995 &  5.0180e-05 &  1.9999 \\
		\hline
	\end{tabular}
\end{table}

{\bf Case 2: a one-dimensional test with $\epsilon \sim O(10^{-2})$.}
We next consider the coefficients $\epsilon = 10^{-2}$. 
The homogeneous Neumann boundary condition is considered at each boundary, and the initial condition \eqref{i.c.} is given in the following form
\begin{align}
	u(x, 0) = 
	\left\{
	\begin{array}{ll}
		0, & x < 0, \\
		\frac{\theta}{d_1}, & x > 0,
	\end{array}
	\right.  \quad
	v(x, 0) = 
	\left\{
	\begin{array}{ll}
		\frac{S_t}{d_2}, & x < 0, \\
		0, & x > 0,
	\end{array}
	\right. \quad
	p(x, 0) = 
	\left\{
	\begin{array}{ll}
		0, & x < 0, \\
		1, & x > 0,
	\end{array}
	\right.
	\nonumber
\end{align} 
where $\theta = 0.05, S_t = 0.25, d_1 = 1, d_2 = 2$.

In this case, we verify the accuracy order of our scheme in time. The computation domain is taken as $[-L, L], L = 1$, the results of the first-order convergence in time at $t = 0.5$ are shown in Figure \ref{fig:con_epspt01_1D} (left), where we take the time step size $\tau = 2^{-j} \tau_0, j = 0, 1, \cdots, 9, 10$ with $\tau_0 = 0.01$ and the spatial mesh size $h = 1.25 \times 10^{-2}$. In this test, the reference solution is obtained by taking $h = 1.25 \times 10^{-2}$ and $\tau = 2^{-13} \tau_0$.

\begin{figure}[htp]
	\centering
	\includegraphics[width=8cm,height=7cm]{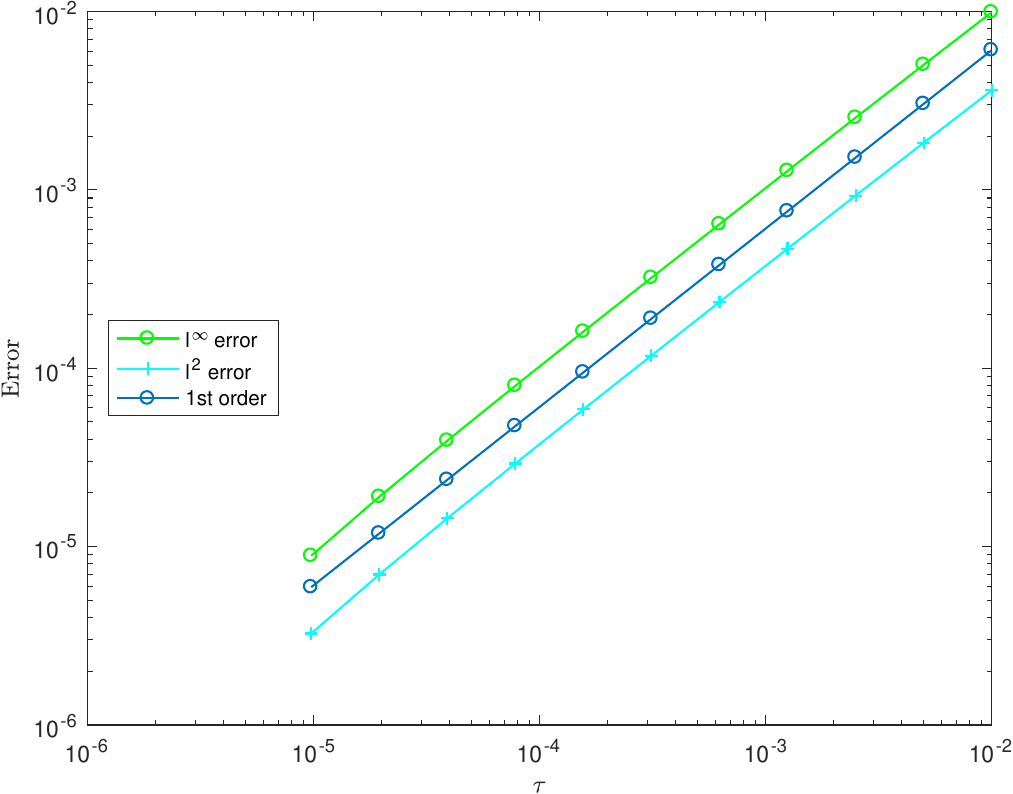}
	\includegraphics[width=8cm,height=7cm]{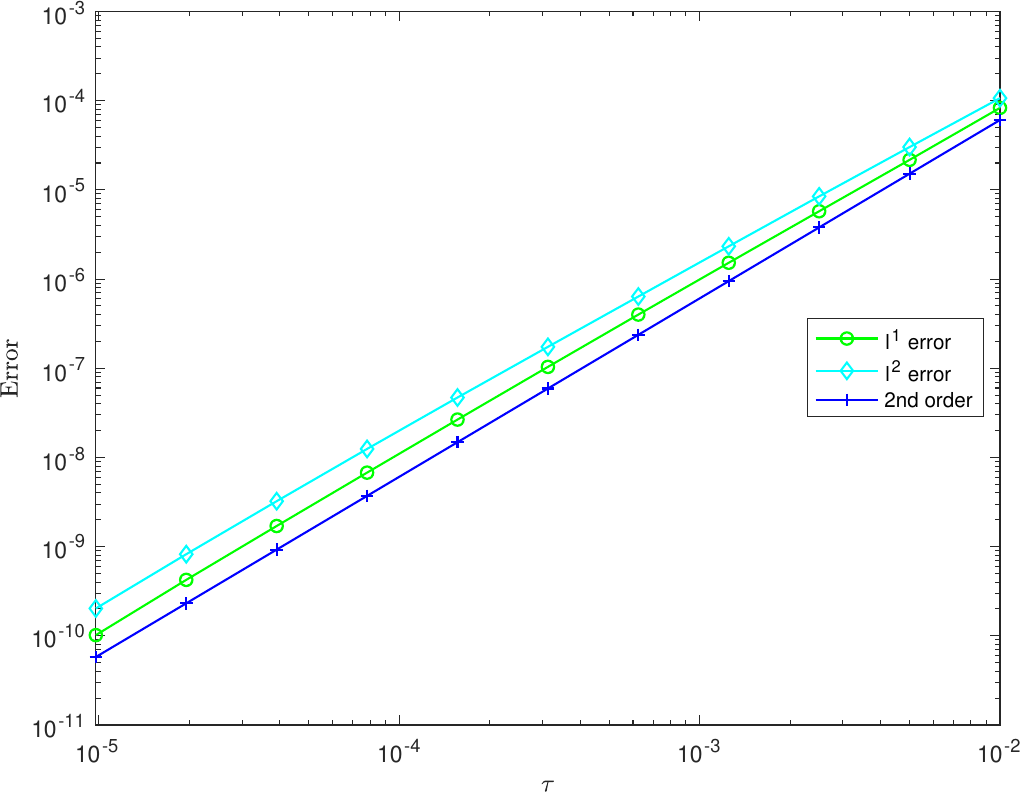}
	\caption{Left: First-order semi-implicit scheme: accuracy order check in $\tau$ with $\epsilon = 10^{-2}$ and $h = 1.25 \times 10^{-2}, \tau = 2^{-j} \tau_0, j = 0, 1, \cdots, 9, 10$. Right: Second-order semi-implicit Runge--Kutta scheme: accuracy order check in $\tau$ with $\epsilon = 10^{-2}$ and $h = 6.25 \times 10^{-3}, \tau = 2^{-j} \tau_0, j = 0, 1, \cdots, 9, 10$.}
	\label{fig:con_epspt01_1D}
\end{figure}

\subsubsection{Convergence test for the second-order semi-implicit Runge--Kutta scheme}
Consider the coefficients $\epsilon = 10^{-2}$. 
The initial and boundary condition is given in the following form
\begin{equation}
	\begin{aligned}
		&u_\epsilon(x, 0) = \frac{\theta}{d_1}, \quad
		v_\epsilon(x, 0) = 0, \quad
		p_\epsilon(x, 0) = 1,  \\
		&u_\epsilon(-L, t) = 0, \quad
		v_\epsilon(-L, t) = \frac{S_t}{d_2}, \quad
		p_\epsilon(-L, t) = 0, \\
		&u_\epsilon(L, t) = \frac{\theta}{d_1}, \quad
		v_\epsilon(L, t) = 0, \quad
		p_\epsilon(L, t) = 1, 
		\nonumber
	\end{aligned}
\end{equation}
where $d_1 = 1$, $d_2 = 2$, $\lambda = 1$, $\theta = 0.05$, $S_t = 0.25$, $L = 1$.

In this case, we verify the accuracy order in time of the semi-implicit Runge--Kutta scheme in Section \ref{sec: rk}. The computation domain is taken as $[-L, L]$, the results of the second-order convergence in time at $t = 0.5$ are shown in Figure \ref{fig:con_epspt01_1D}(Right), where we take the time step size $\tau = 2^{-j} \tau_0, j = 0, 1, \cdots, 9, 10$ with $\tau_0 = 0.01$ and the spatial mesh size $h = 6.25 \times 10^{-3}$. In this test, the reference solution is obtained by taking $h = 6.25 \times 10^{-3}$ and $\tau = 2^{-13} \tau_0$.

\subsection{Limit behavior}
\subsubsection{One-dimensional case}
In order to demonstrate the limit behavior of the fast reaction--diffusion system \eqref{eq:u}-\eqref{eq:p}, we consider the following model with Dirichlet boundary conditions on the domain $[-L, L] \times [0, T]$,
\begin{equation}\label{test:limit}
	\begin{aligned}
		&\partial_t u_\epsilon - d_1 \Delta u_\epsilon = -\frac{1}{\epsilon} u_\epsilon [v_\epsilon + \lambda (1 - p_\epsilon)],  \\
		&\partial_t v_\epsilon - d_2 \Delta v_\epsilon = -\frac{1}{\epsilon} v_\epsilon (u_\epsilon + \lambda p_\epsilon),  \\
		&\partial_t p_\epsilon =  \frac{1}{\epsilon} [(1 - p_\epsilon) u_\epsilon - v_\epsilon p_\epsilon],  \\
		&u_\epsilon(x, 0) = \frac{\theta}{d_1}, \quad
		v_\epsilon(x, 0) = 0, \quad 
		p_\epsilon(x, 0) = 1,  \\
		&u_\epsilon(-L, t) = 0, \quad 
		v_\epsilon(-L, t) = \frac{S_t}{d_2}, \quad
		p_\epsilon(-L, t) = 0, \\
		&u_\epsilon(L, t) = \frac{\theta}{d_1}, \quad
		v_\epsilon(L, t) = 0, \quad 
		p_\epsilon(L, t) = 1, 
	\end{aligned}
\end{equation}
where $d_1 = 1$, $d_2 = 2$, $\lambda = 1$, $\theta = 0.05$, $S_t = 0.25$, $L = 1$, $T = 1$.

In this test, we first investigate the dynamic behavior of the solutions to the system \eqref{test:limit}. We first take $\epsilon = 10^{-2}$ and the mesh size $h = 0.0125$ and $\tau = 5 \times 10^{-6}$. Figure \ref{plot: eppt01_limit} shows the results of $u, v, p$ and the enthalpy function $w$ at time $t$ from $0$ to $1$. 
We can numerically check the positivity and bound preserving properties, which agrees with the numerical analysis in Section \ref{sec: si}. Furthermore, as the strength of the reaction is much greater than that of the diffusion process, we can find that the increase or decrease of $u$ is consistent with $p$, and the opposite is true for $v$.
Next we retake $\epsilon = 10^{-4}$, Figure \ref{plot: eppt0001_limit} shows the corresponding results with $h = 6.25 \times 10^{-3}$ and $\tau = 5 \times 10^{-6}$. 
After comparing the dynamic behavior of the solutions with different $\epsilon$, we can conclude that the smaller the $\epsilon$ the sharper the interface. 
The sharp interface is referred to as the phase transition interface in the limit Stefan problem, whose speed is determined by the Stefan's condition. 
In this way, assume that the system is full of liquid at the initial moment, such as water, and give a freezing stimulus at the left edge of the system, then we can conclude that the liquid exothermizes and freezes, and the movement of the sharp interface moving to the right inscribes the liquid solidification process.
Then Figure \ref{plot: compare_eps} shows more details on the sharp interface with various $\epsilon$.

From Figure \ref{plot: compare_eps_implicit} we can conclude that for the first-order scheme, smaller parameters $\epsilon$ impose tighter requirements on the mesh size. Combined with the limit behavior, proposing an asymptotic preserving scheme on $\epsilon$, which avoids the use of very fine meshes, is our future consideration.

\begin{figure}[htp]
	\centering
	\includegraphics[width=5.0cm,height=5cm]{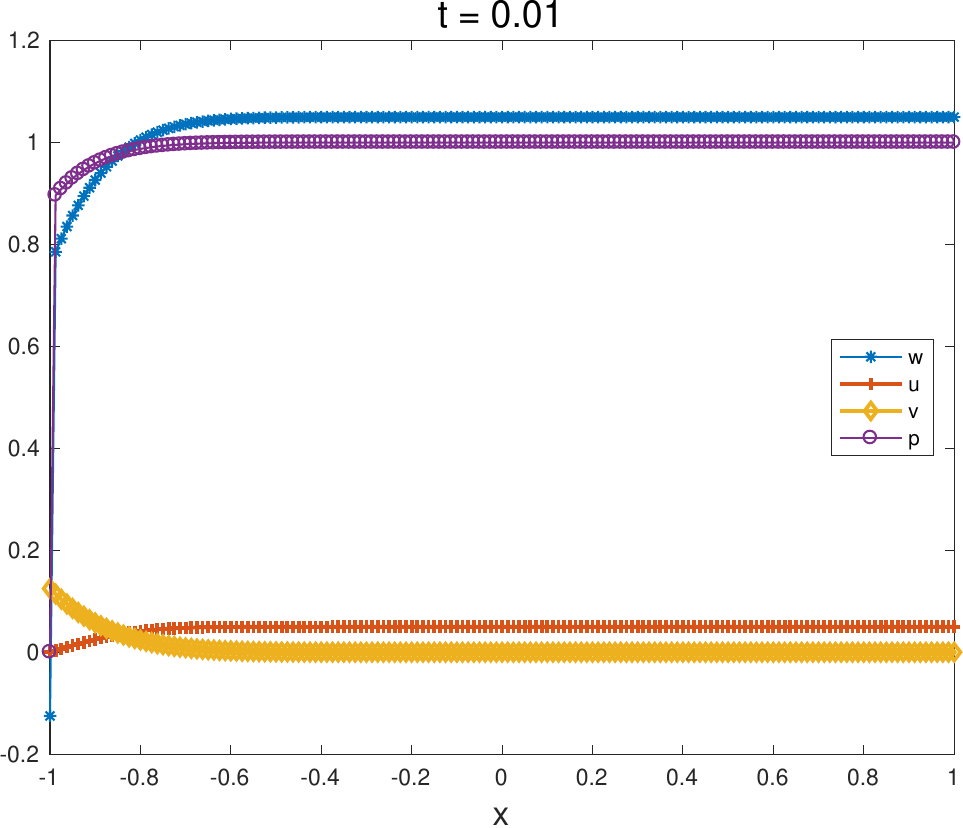}
	\includegraphics[width=5.0cm,height=5cm]{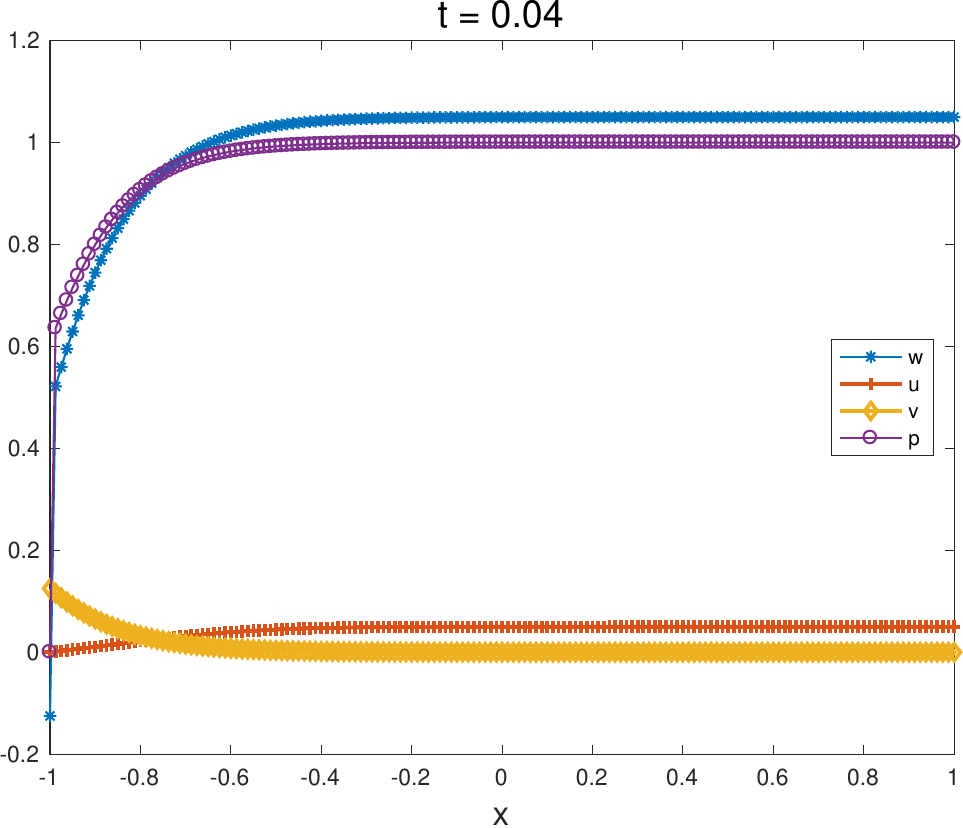}
	\includegraphics[width=5.0cm,height=5cm]{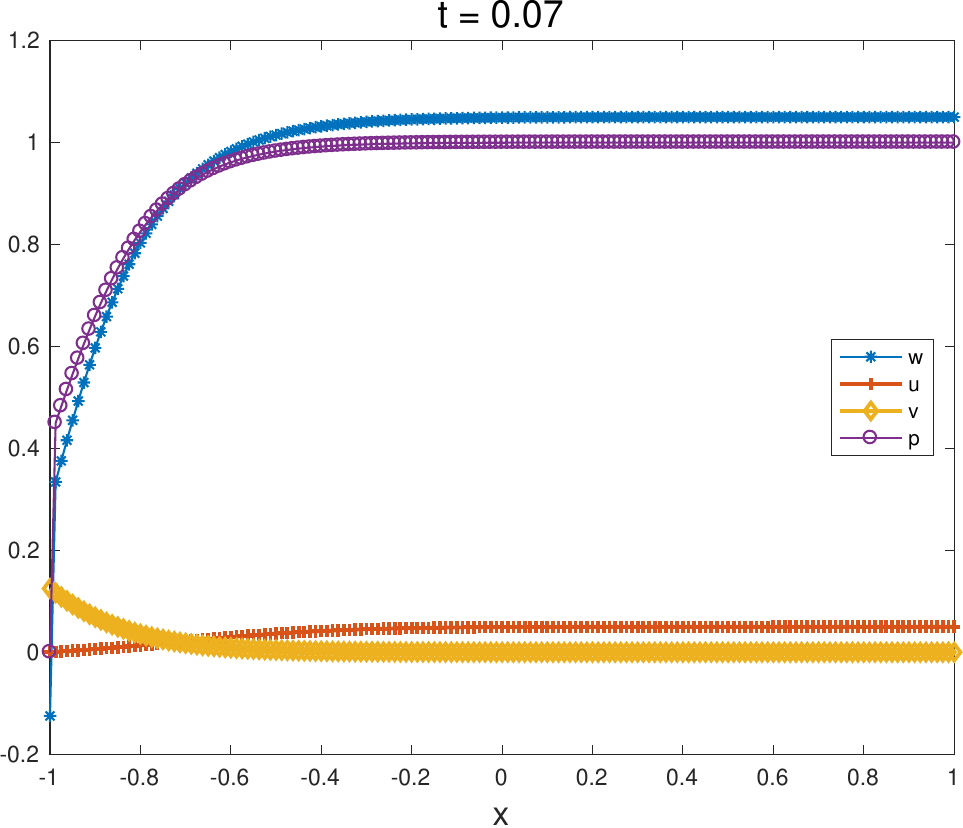}
	\includegraphics[width=5.0cm,height=5cm]{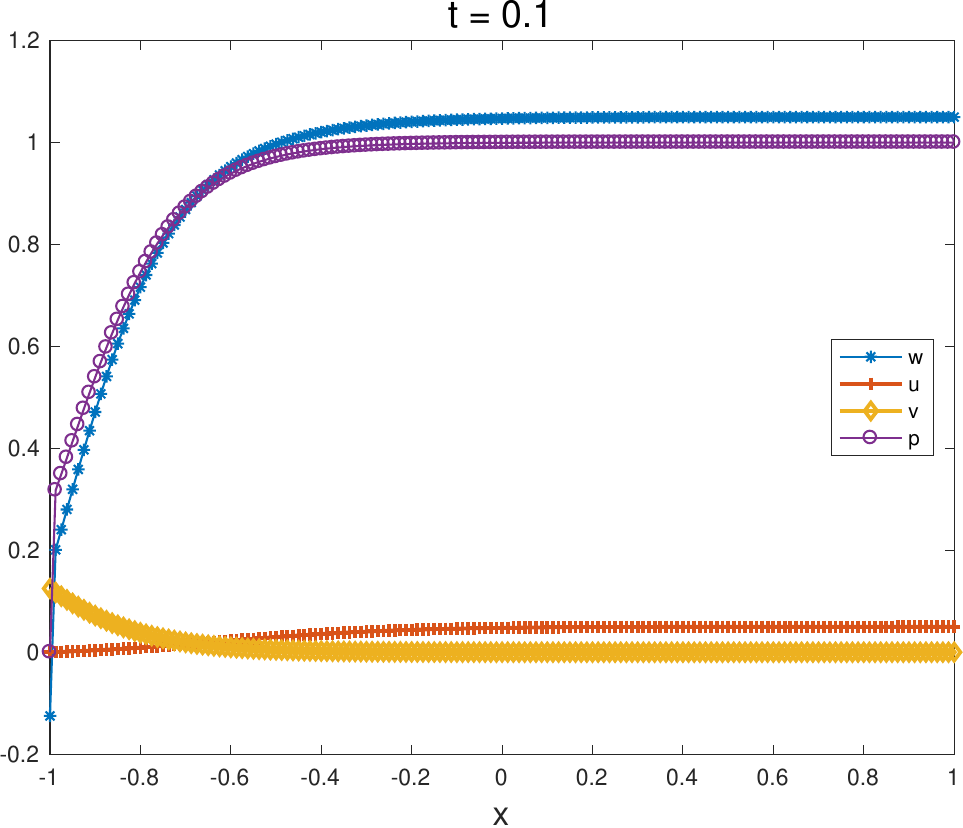}
	\includegraphics[width=5.0cm,height=5cm]{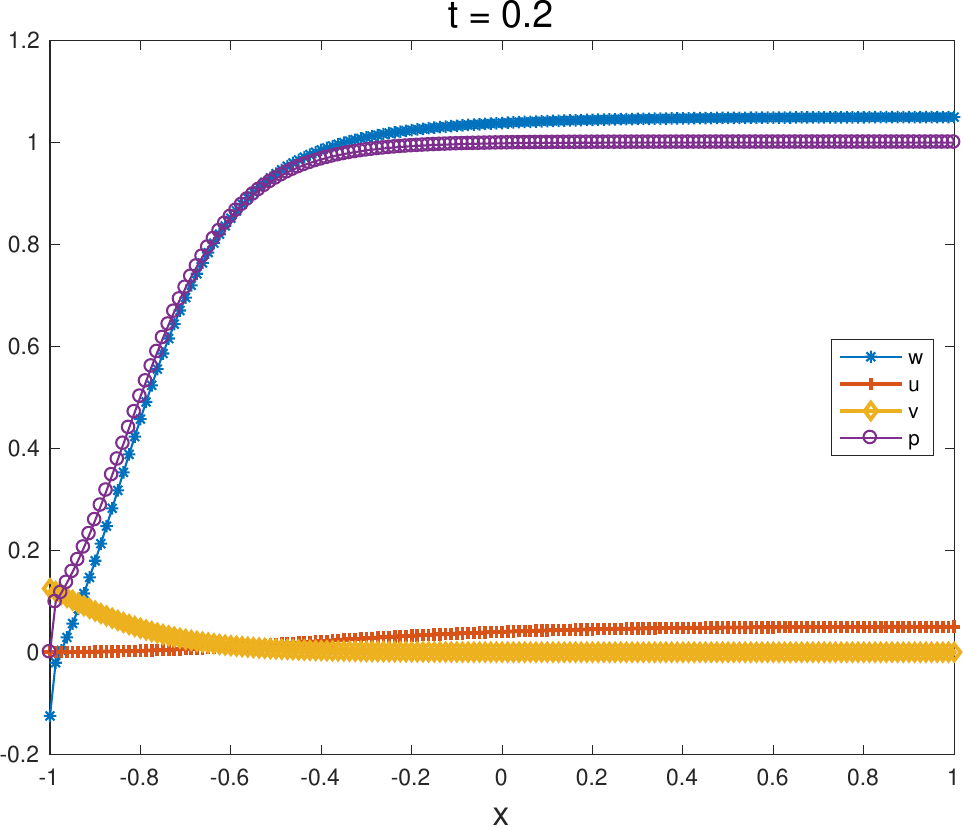}
	\includegraphics[width=5.0cm,height=5cm]{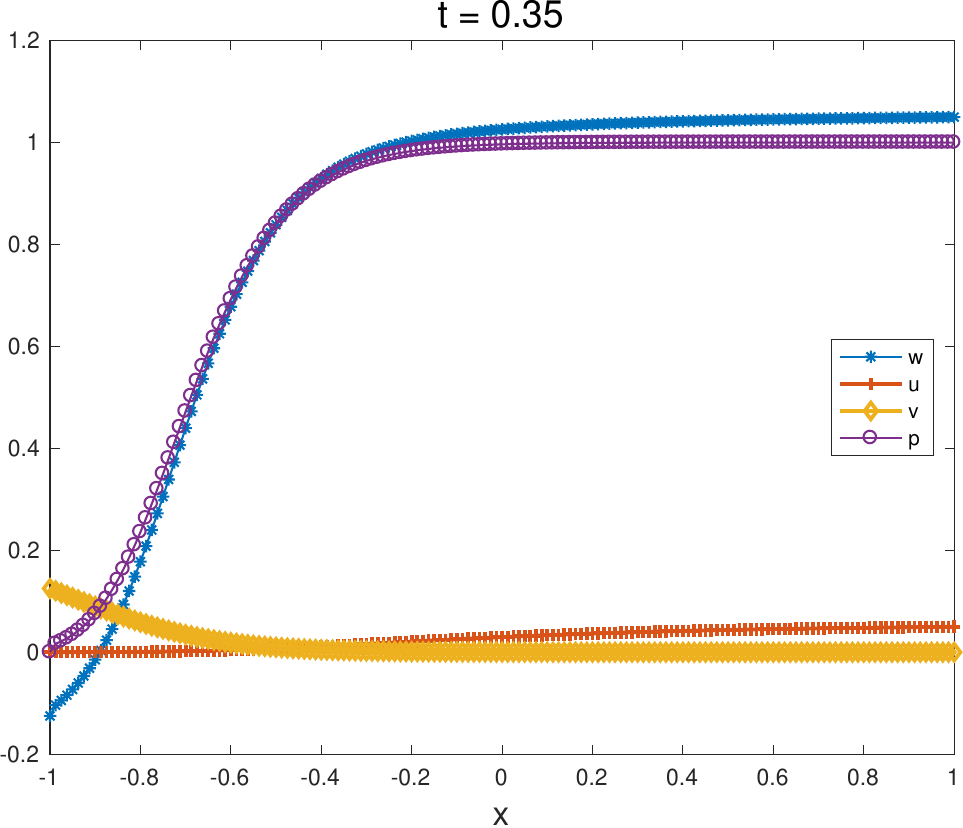}
	\includegraphics[width=5.0cm,height=5cm]{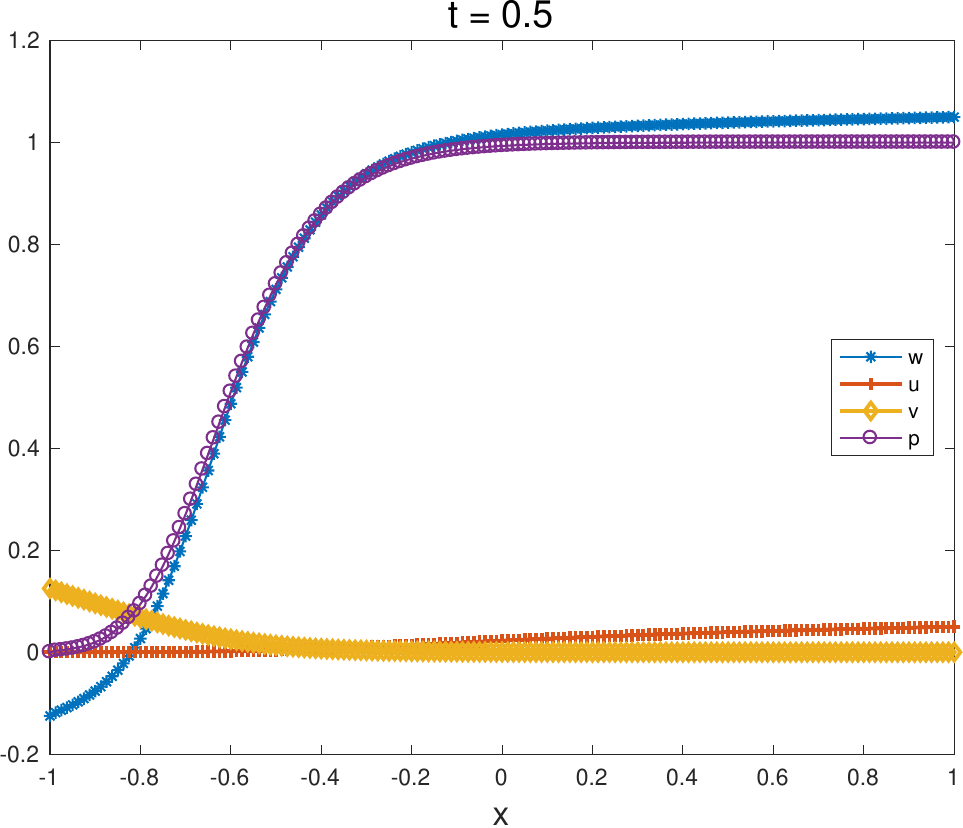}
	\includegraphics[width=5.0cm,height=5cm]{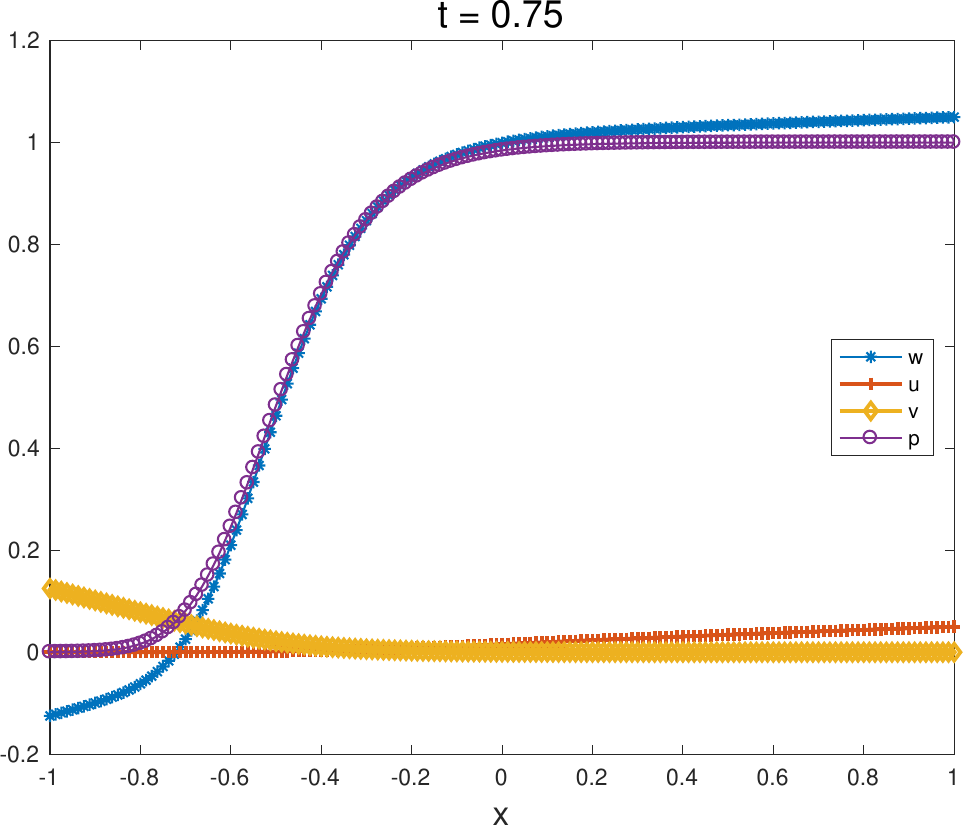}
	\includegraphics[width=5.0cm,height=5cm]{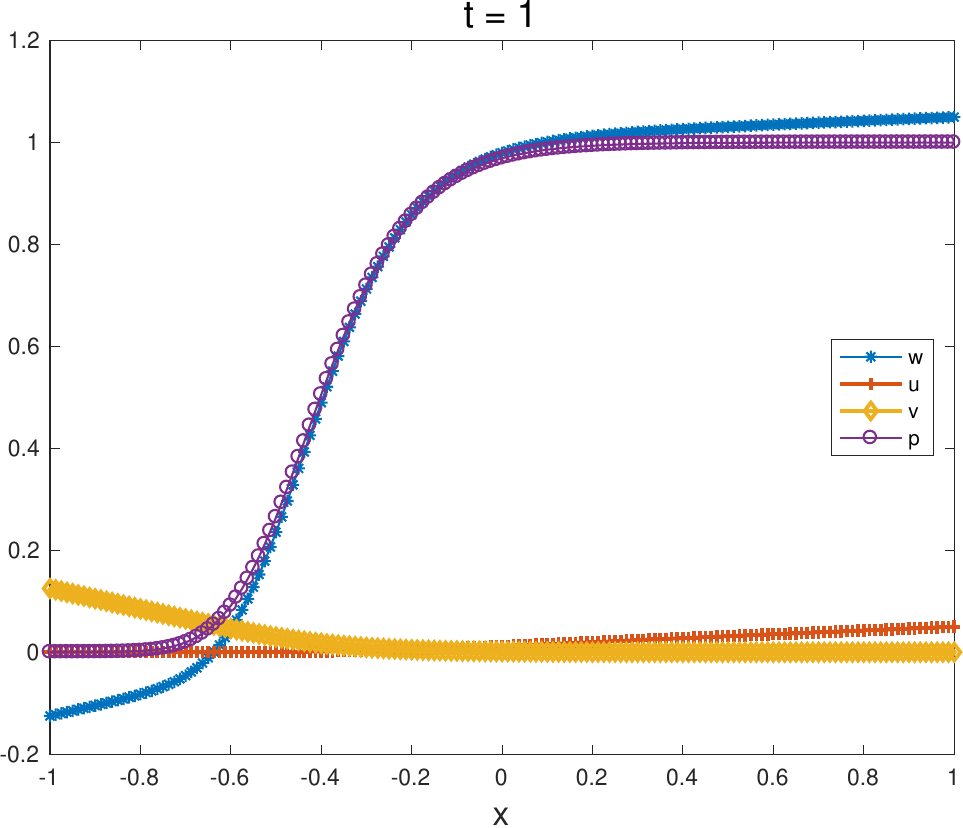}
	\caption{First-order semi-implicit scheme with $\epsilon = 10^{-2}$ and $h = 0.0125, \tau = 5 \times 10^{-6}$.}
	\label{plot: eppt01_limit}
\end{figure}
\begin{figure}[htp]
	\centering
	\includegraphics[width=5.0cm,height=5cm]{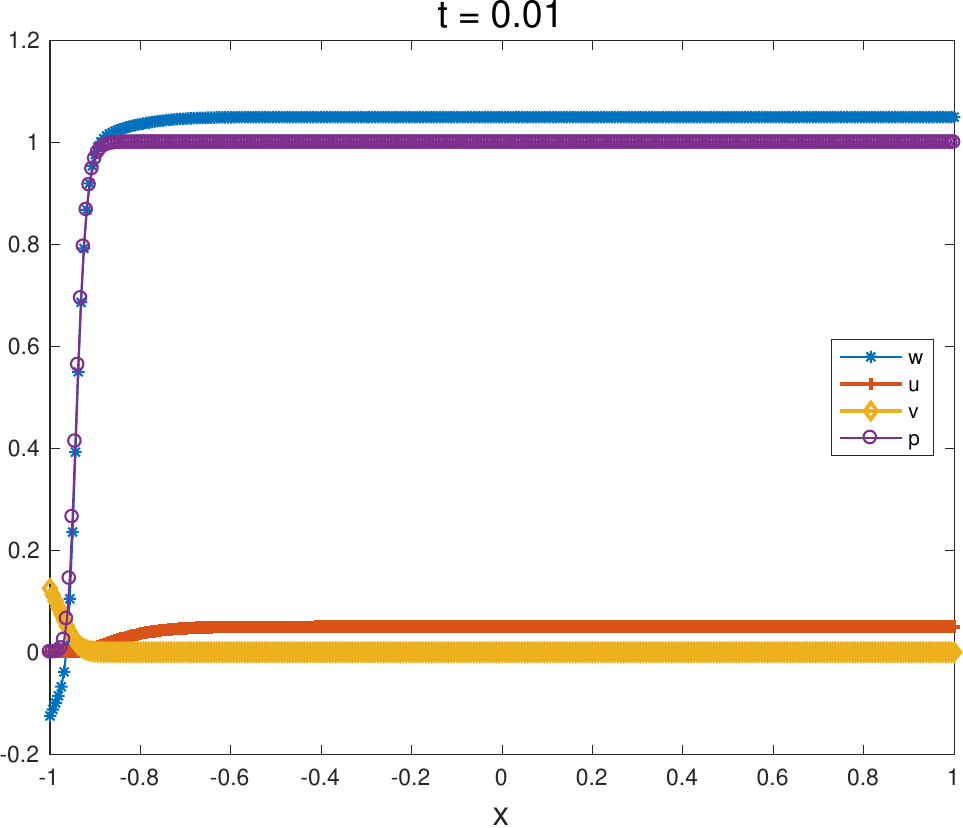}
	\includegraphics[width=5.0cm,height=5cm]{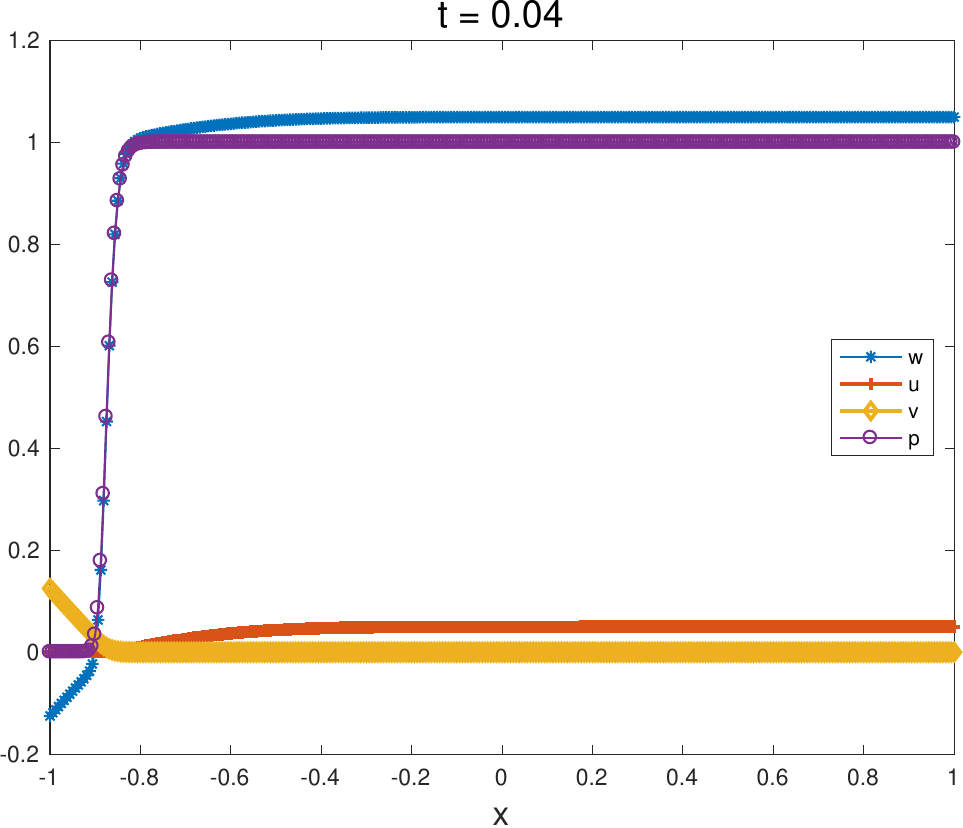}
	\includegraphics[width=5.0cm,height=5cm]{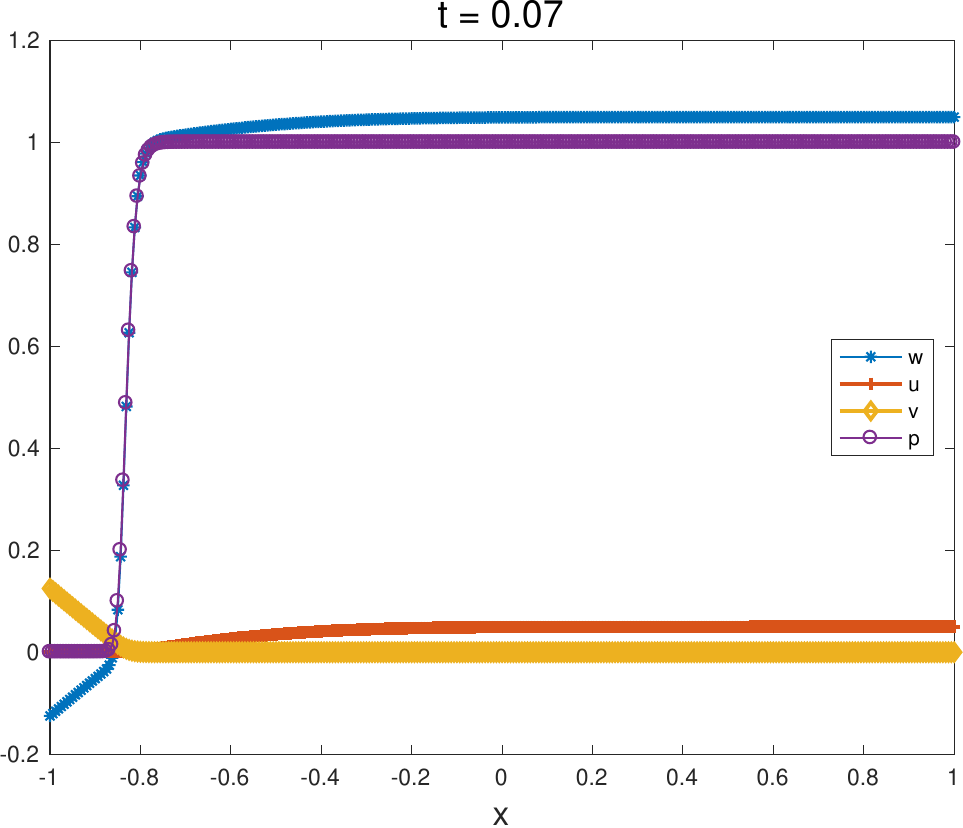}
	\includegraphics[width=5.0cm,height=5cm]{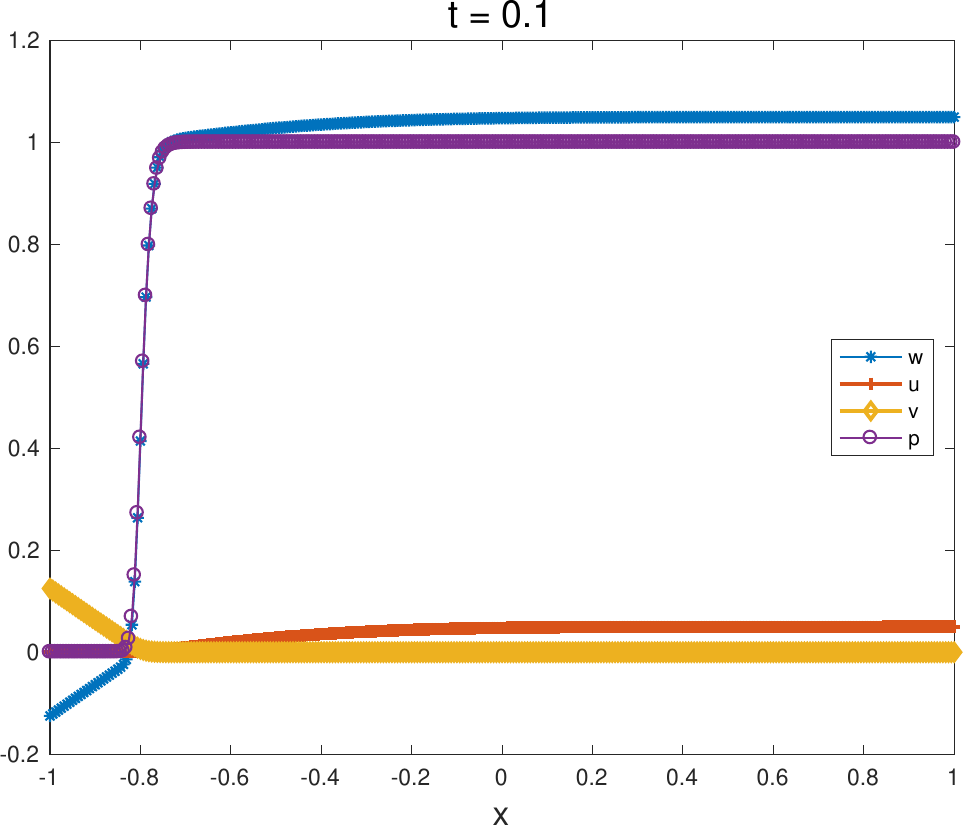}
	\includegraphics[width=5.0cm,height=5cm]{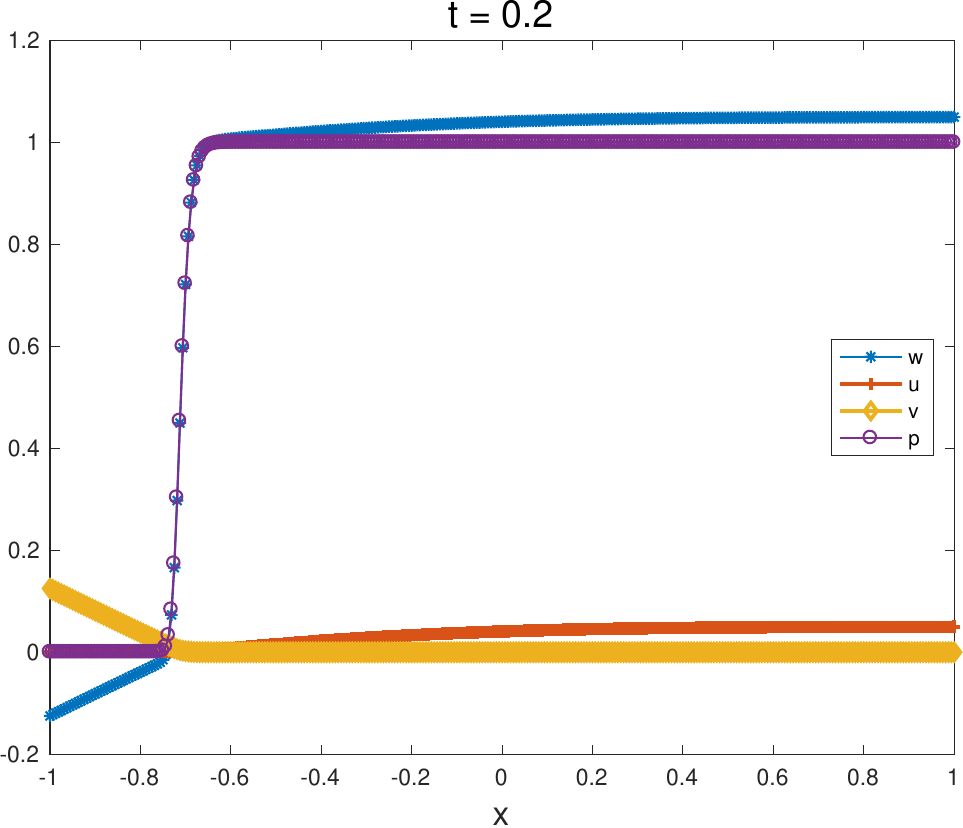}
	\includegraphics[width=5.0cm,height=5cm]{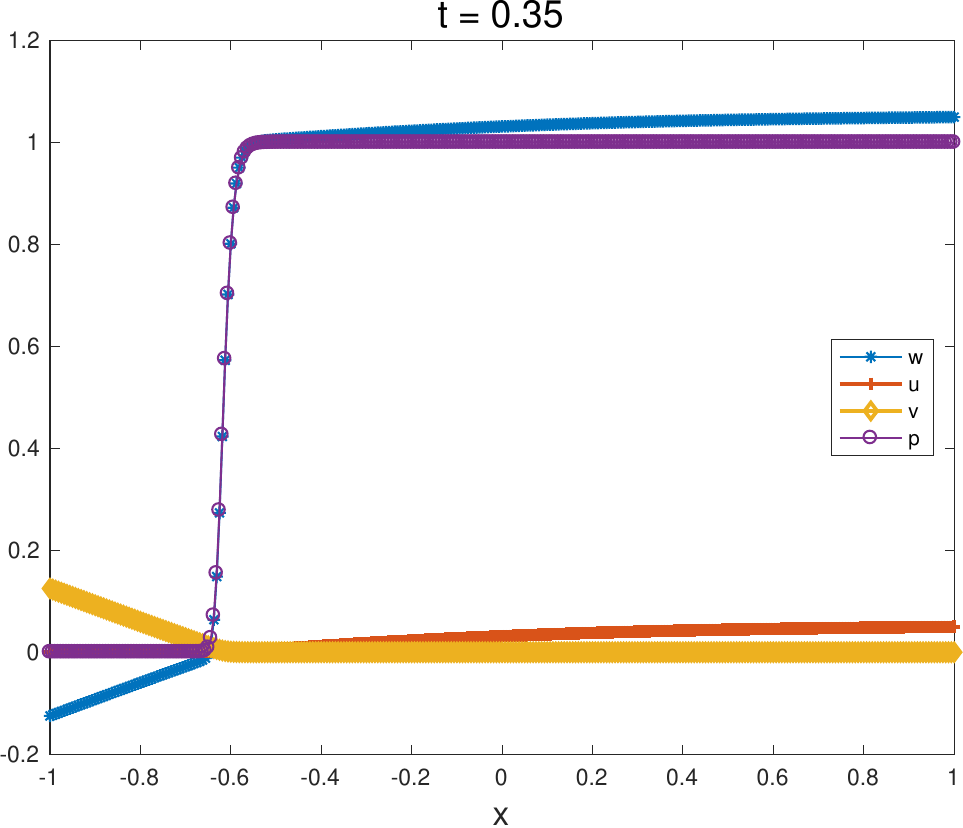}
	\includegraphics[width=5.0cm,height=5cm]{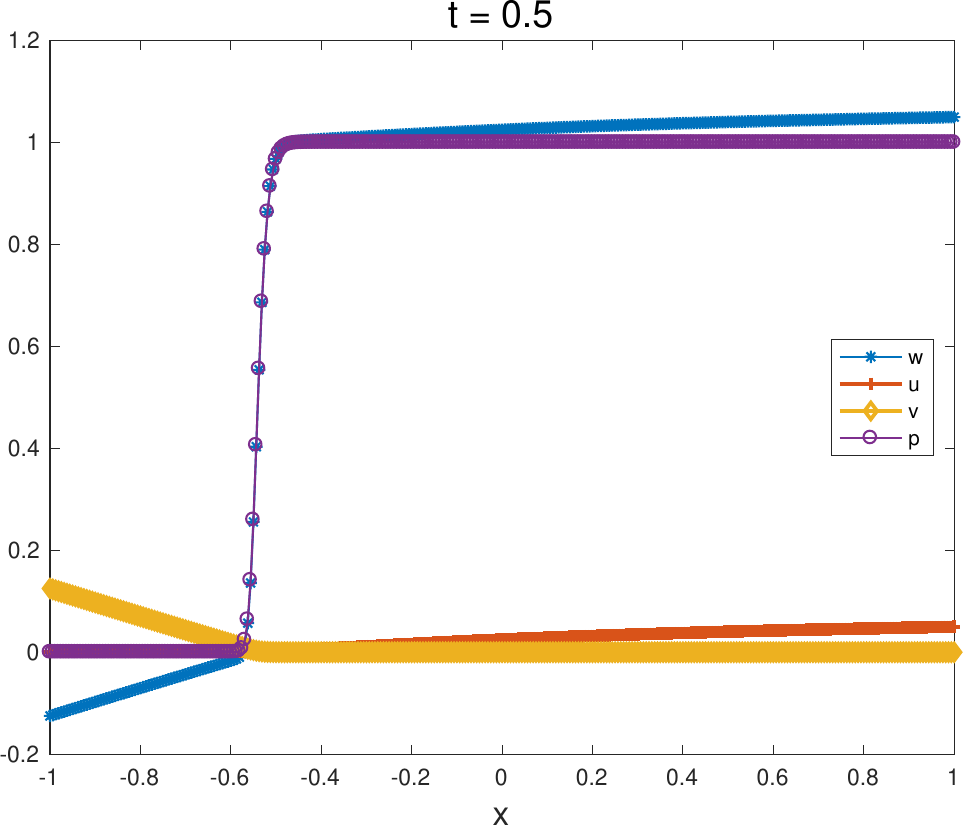}
	\includegraphics[width=5.0cm,height=5cm]{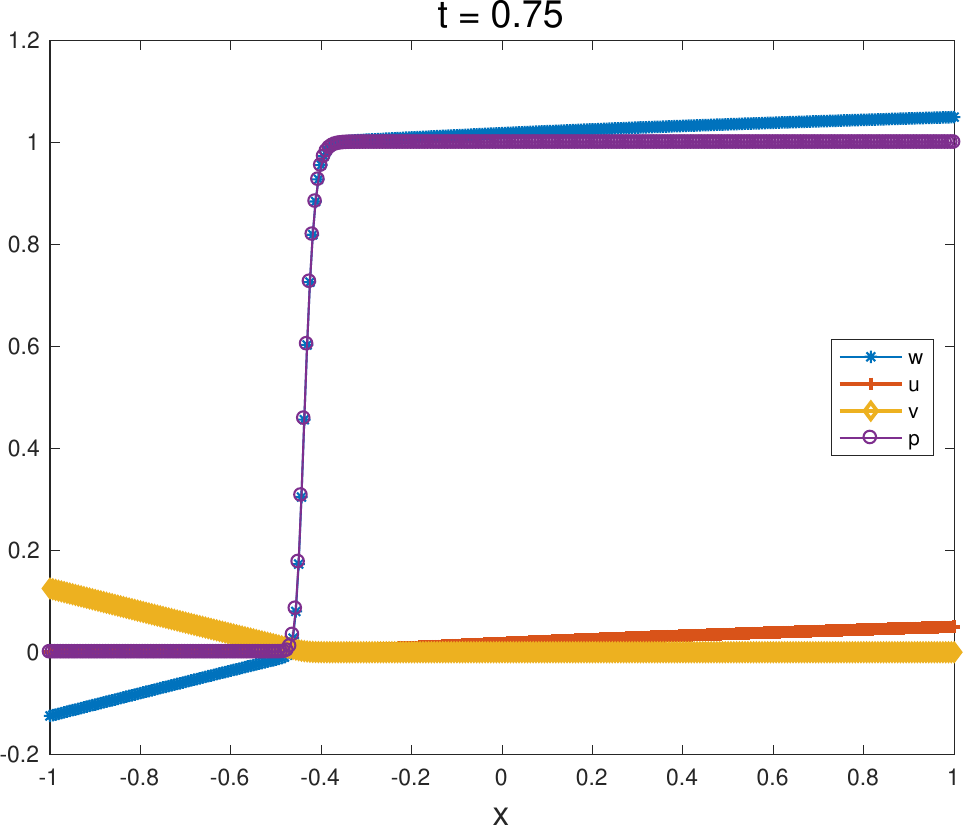}
	\includegraphics[width=5.0cm,height=5cm]{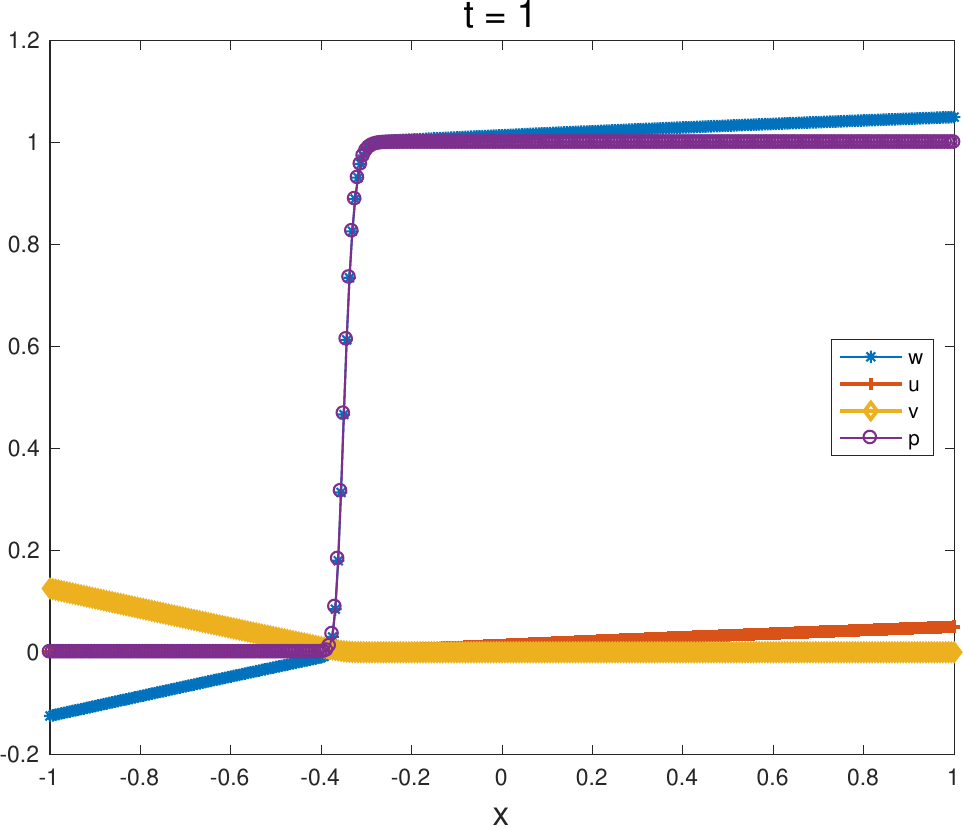}
	\caption{First-order semi-implicit scheme: solutions $u, v, p, w$ with $\epsilon = 10^{-4}$ and $h = 6.25 \times 10^{-3}$, $\tau = 5 \times 10^{-6}$.}
	\label{plot: eppt0001_limit}
\end{figure}

\begin{figure}[htp]
	\centering
	\includegraphics[width=10.0cm,height=9cm]{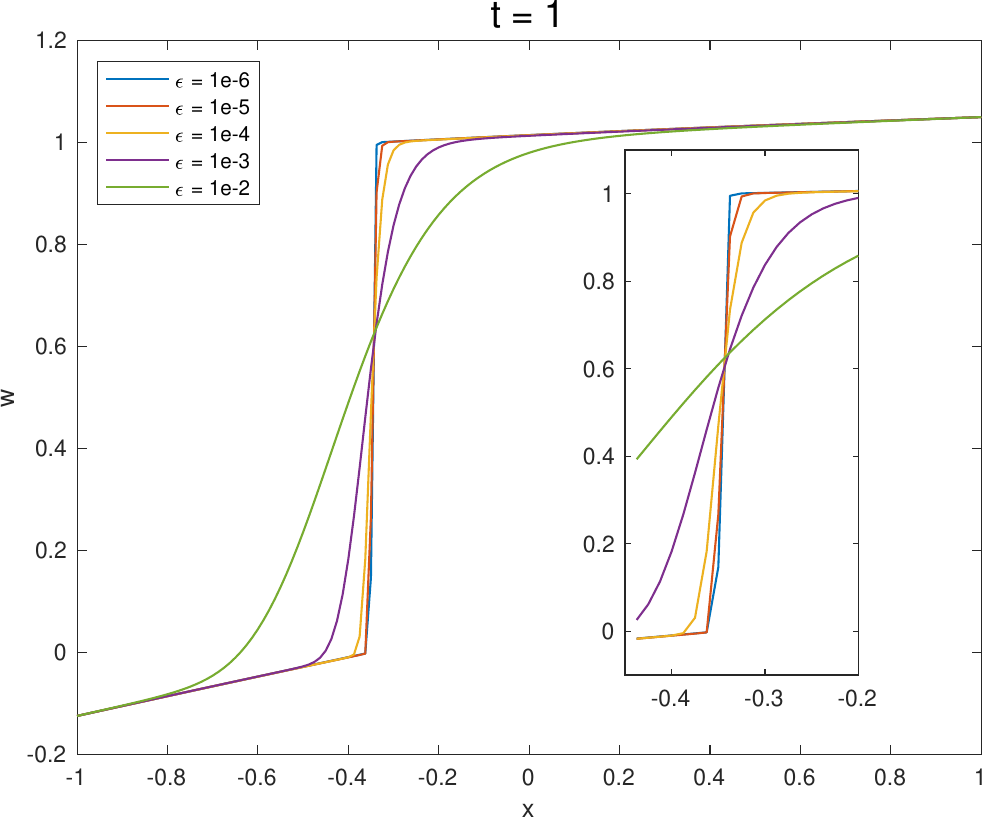}
	\caption{First-order semi-implicit scheme: the enthalpy function $w$ at $t = 1$ with various value of $\epsilon$ with $\tau = 5 \times 10^{-6}$ and $h = 0.0125$.}
	\label{plot: compare_eps}
\end{figure}
\begin{figure}[htp]
	\centering
	\includegraphics[width=8.0cm,height=7cm]{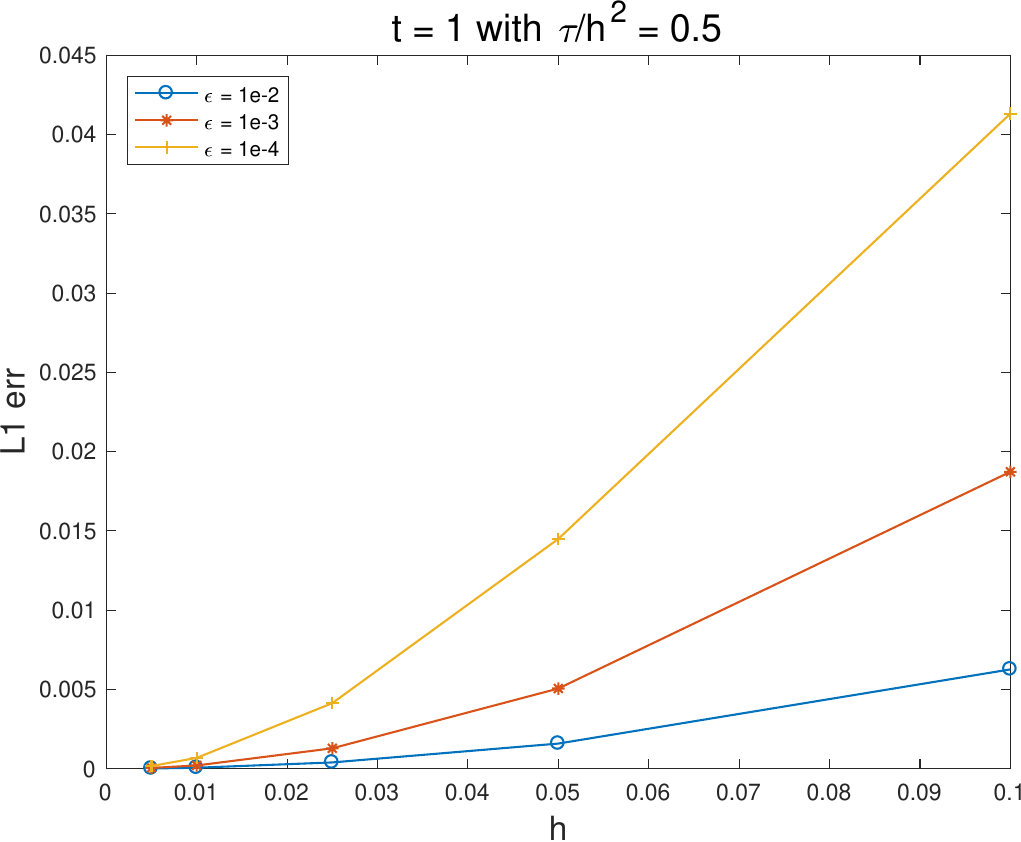}
	\includegraphics[width=8.0cm,height=7cm]{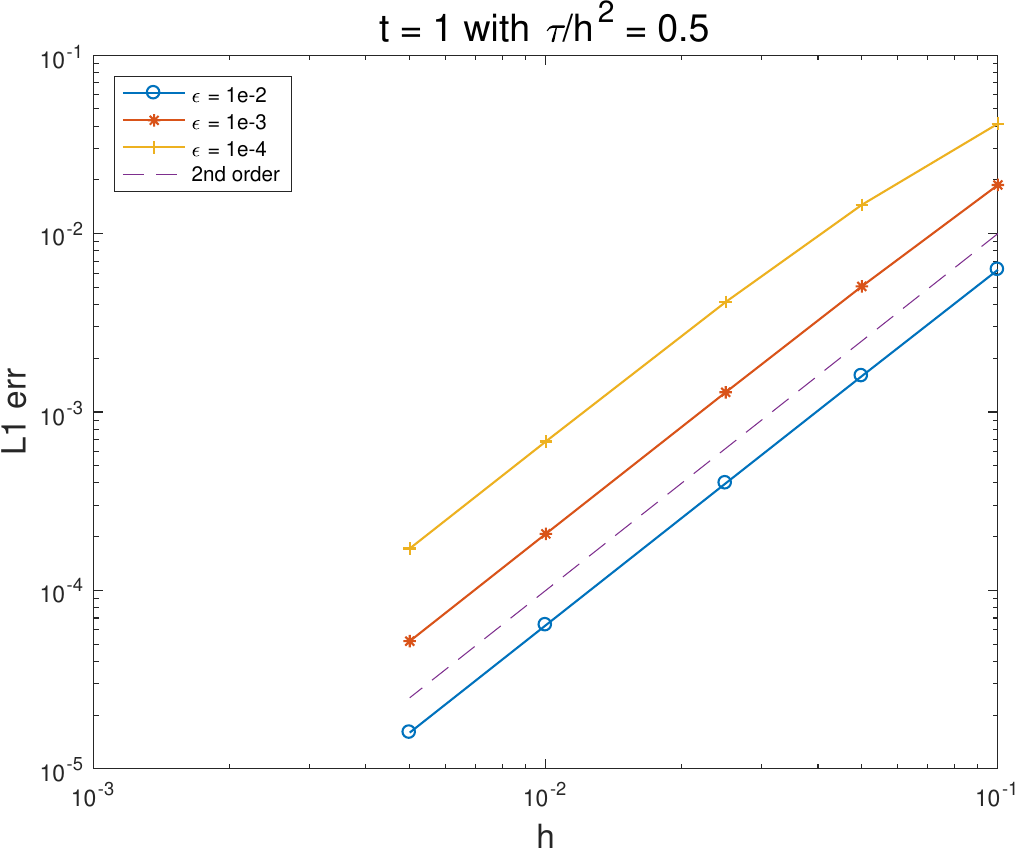}
	\caption{First-order semi-implicit scheme: $e_{w_{\tau,h}}^{1}$ at $t = 1$ with various value of $\epsilon$, $\tau = 0.01, 0.005, 0.0025, 0.001, 0.0005$ and $\tau/h^2 = 0.05$ where the reference solution is obtained by the fully-implicit method.}
	\label{plot: compare_eps_implicit}
\end{figure}

\subsubsection{Two-dimensional case}
In this test, we consider $\boldsymbol{x} = (x_1, x_2)^\prime \in [-L, L]\times[-L, L]$, $0 \le t \leqslant T$. The initial condition \eqref{i.c.} and Dirichlet boundary conditions \eqref{Dirichlet} given in the following form,
\begin{align}
	u_\epsilon(\boldsymbol{x}, 0) &= 
	\left\{
	\begin{array}{ll}
		0, & -L < x_1 \leq 0, \\
		\frac{\theta}{d_1}, & 0 < x_1 < L,
	\end{array}
	\right. \nonumber\\
	v_\epsilon(\boldsymbol{x}, 0) &= 
	\left\{
	\begin{array}{ll}
		\frac{\theta}{d_2}, & -L < x_1 \leq 0, \\
		0, & 0 < x_1 < L,
	\end{array}
	\right. \nonumber\\
	p_\epsilon(\boldsymbol{x}, 0) &= 
	\left\{
	\begin{array}{ll}
		0, & -L \leq x_1 \leq 0, \nonumber\\
		1, & 0 < x_1 \leq L,
	\end{array}
	\right. \nonumber\\
	u_\epsilon(\boldsymbol{x}, t)|_{x_1 = -L} &= 0, \quad u_\epsilon(\boldsymbol{x}, t)|_{x_1 = L} =  \frac{1}{2} (S_m - S_t) x_2 + \frac{1}{2} (S_m + S_t), \nonumber \\
	v_\epsilon(\boldsymbol{x}, t)|_{x_1 = -L} &= -\frac{1}{2} (S_m - S_t) x_2 + \frac{1}{2} (S_m + S_t), \quad v_\epsilon(\boldsymbol{x}, t)_{x_1 = L} = 0, \nonumber
\end{align}
where $d_1 = 1$, $d_2 = 2$, $\lambda = 1$, $S_t = 0.25$, $S_m = 0.05$, $L = 1$, $T = 3$ and 
\begin{align}
	\theta (\boldsymbol{x}, 0) = 
	\left\{
	\begin{array}{ll}
		-0.05, & -L < x_1 < 0, \\
		0, & x_1 = 0, \\
		0.05, & 0 < x_1 < L.
	\end{array}
	\right.
	\nonumber
\end{align}
Figure \ref{plot: semi_2d phi} and \ref{plot: semi_2d theta} give the results of the enthalpy function $w$ and the temperature $\theta = B(w)$ of the substance with $\epsilon = 10^{-5}$ and the mesh size $\Delta x = 0.05$, $\tau = 0.001$.

\begin{figure}[htp]
	\centering
	\includegraphics[width=4.0cm,height=4cm]{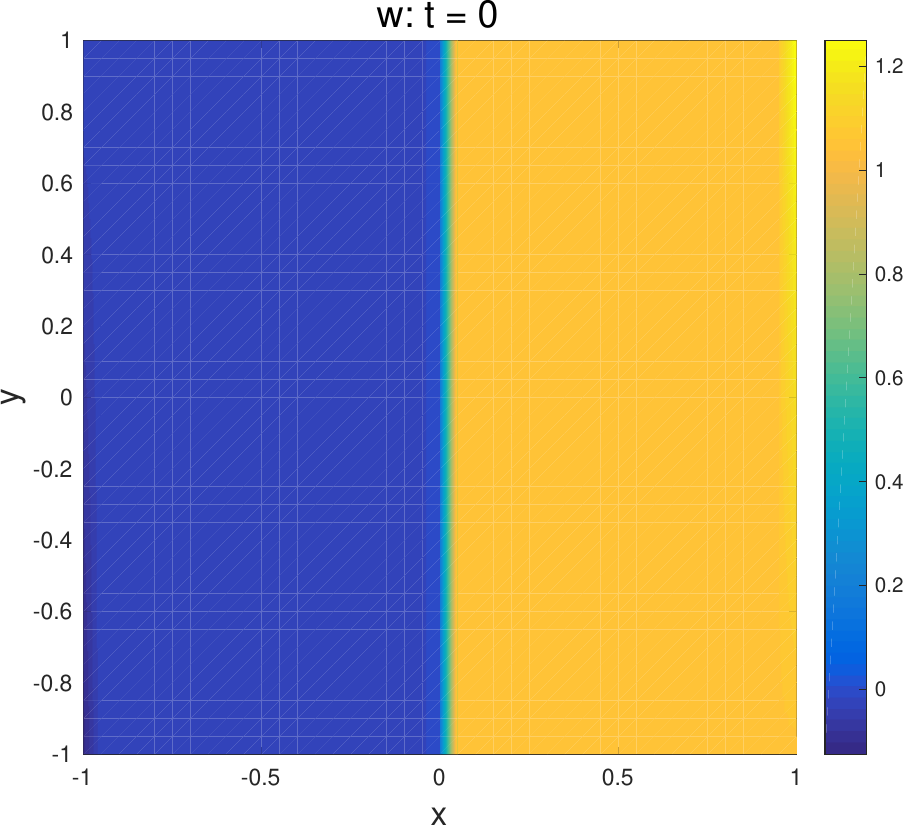}
	\includegraphics[width=4.0cm,height=4cm]{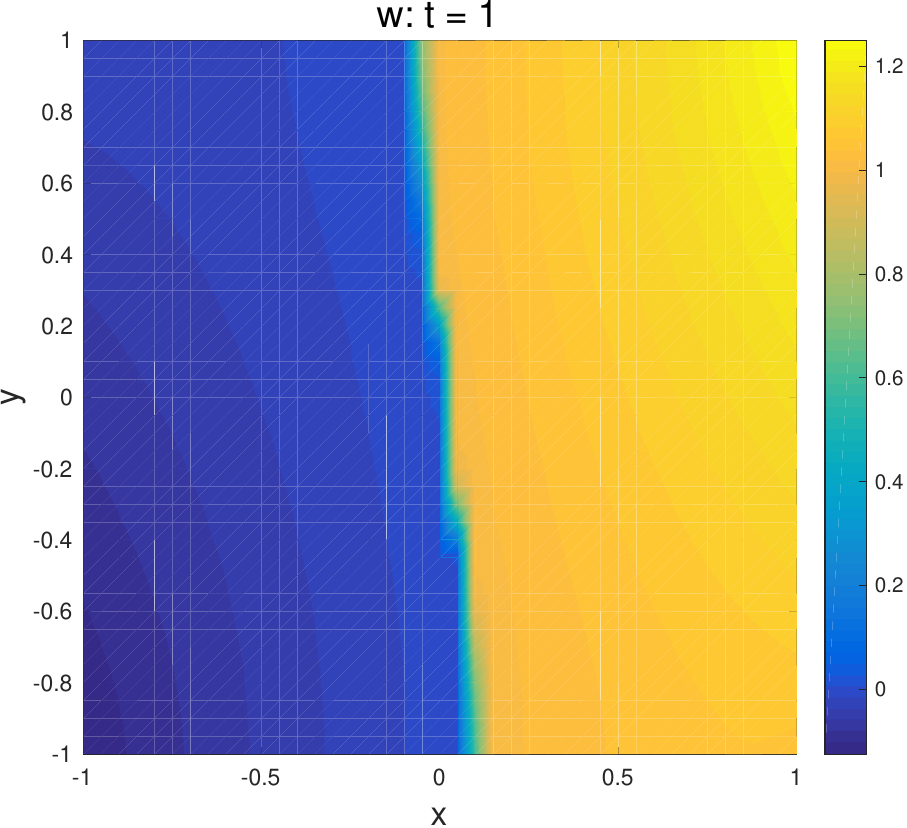}
	\includegraphics[width=4.0cm,height=4cm]{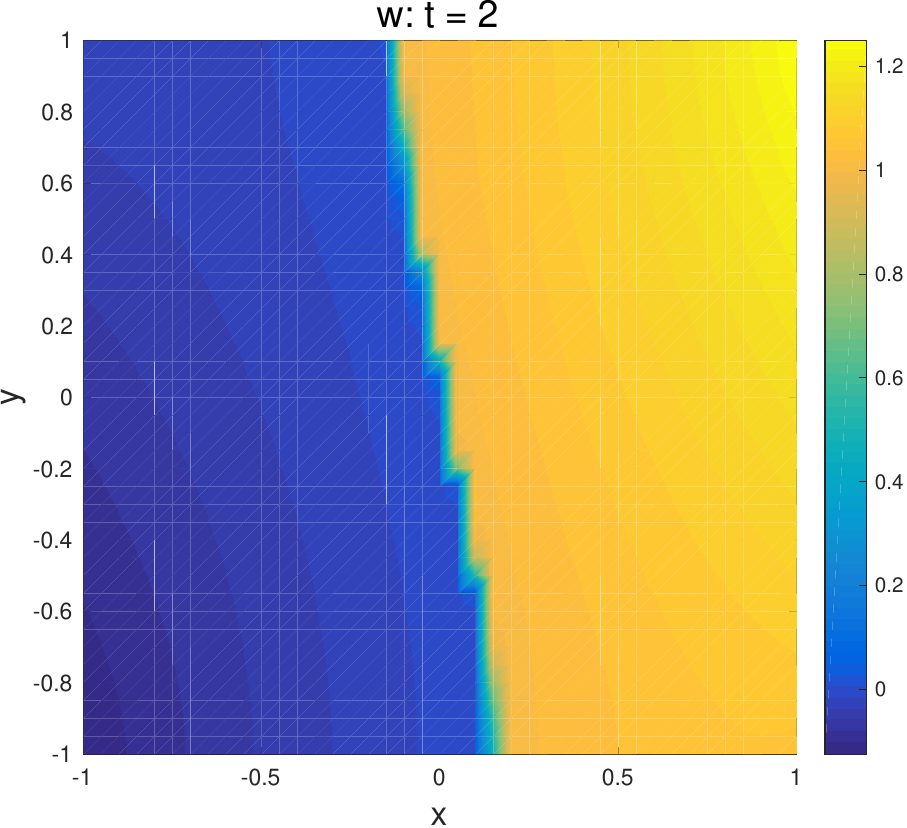}
	\includegraphics[width=4.0cm,height=4cm]{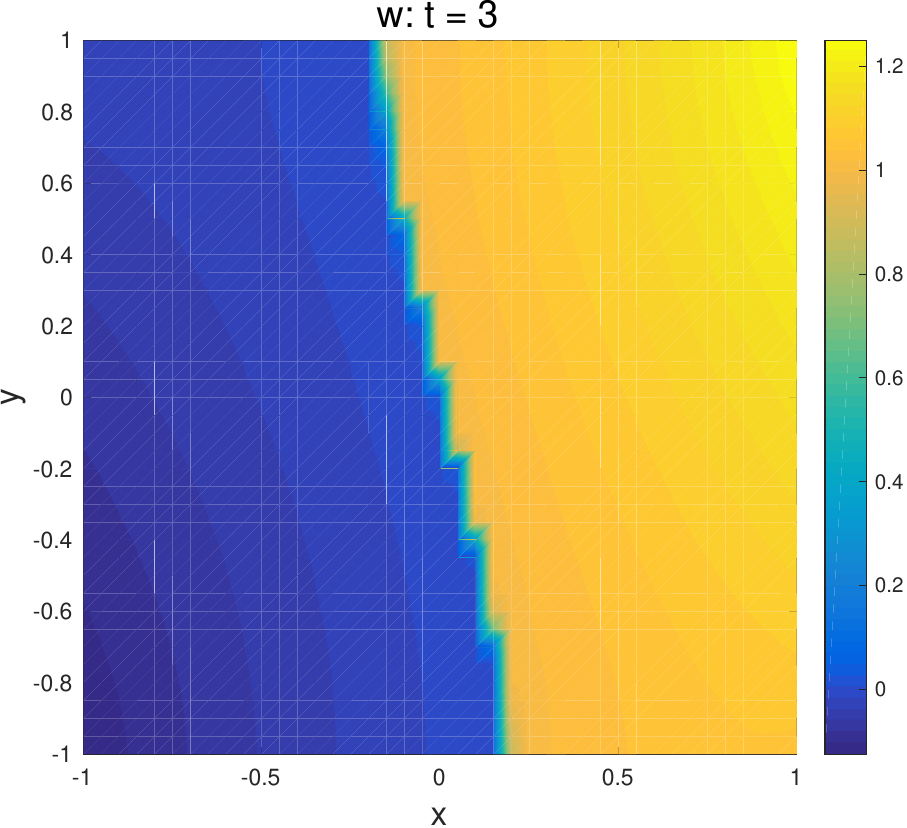}
	\caption{First-order semi-implicit scheme: $w$ with $\epsilon = 10^{-5}$ and $h = 0.05, \tau = 0.001$.}
	\label{plot: semi_2d phi}
\end{figure}

\begin{figure}[htp]
	\centering
	\includegraphics[width=4.0cm,height=4cm]{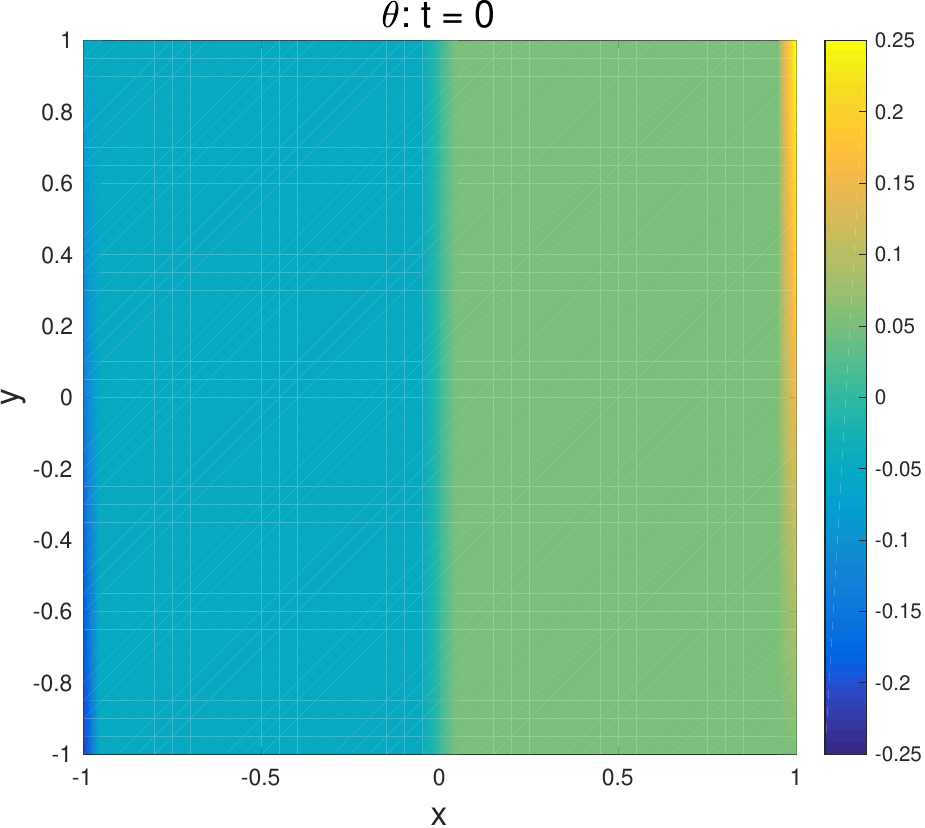}
	\includegraphics[width=4.0cm,height=4cm]{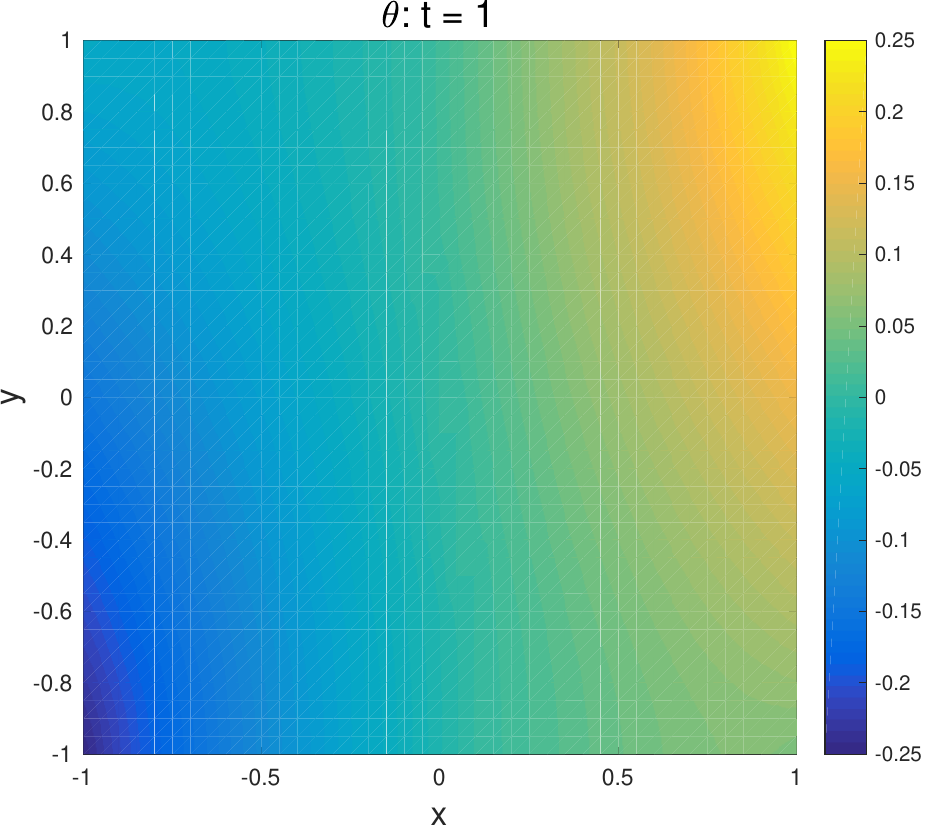}
	\includegraphics[width=4.0cm,height=4cm]{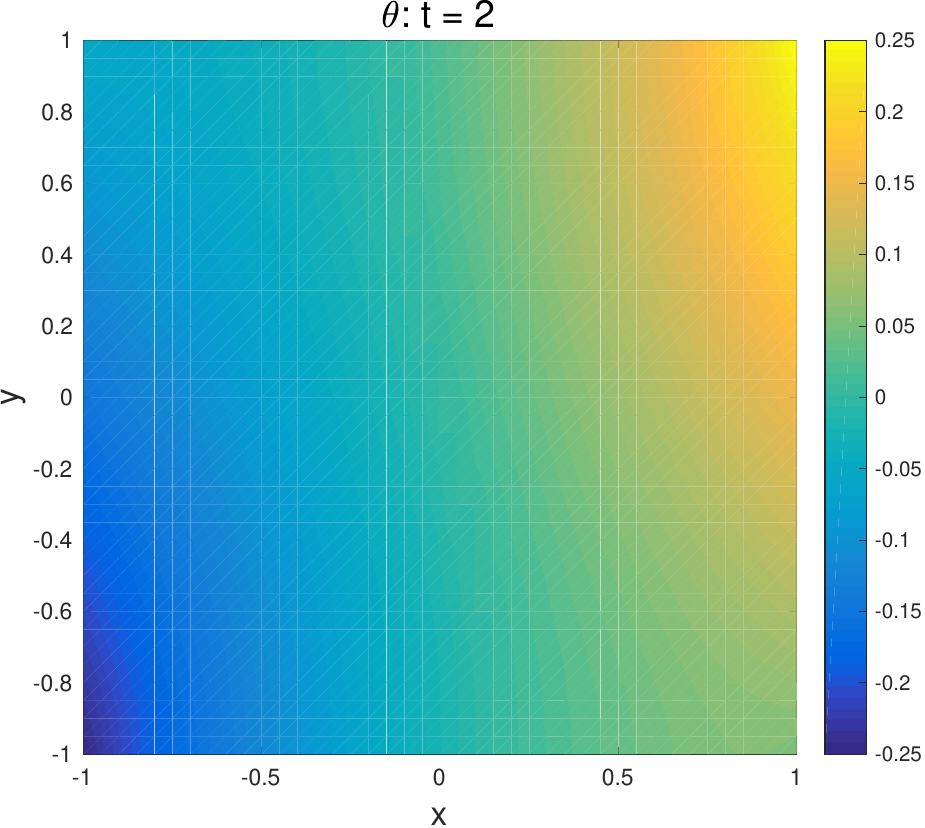}
	\includegraphics[width=4.0cm,height=4cm]{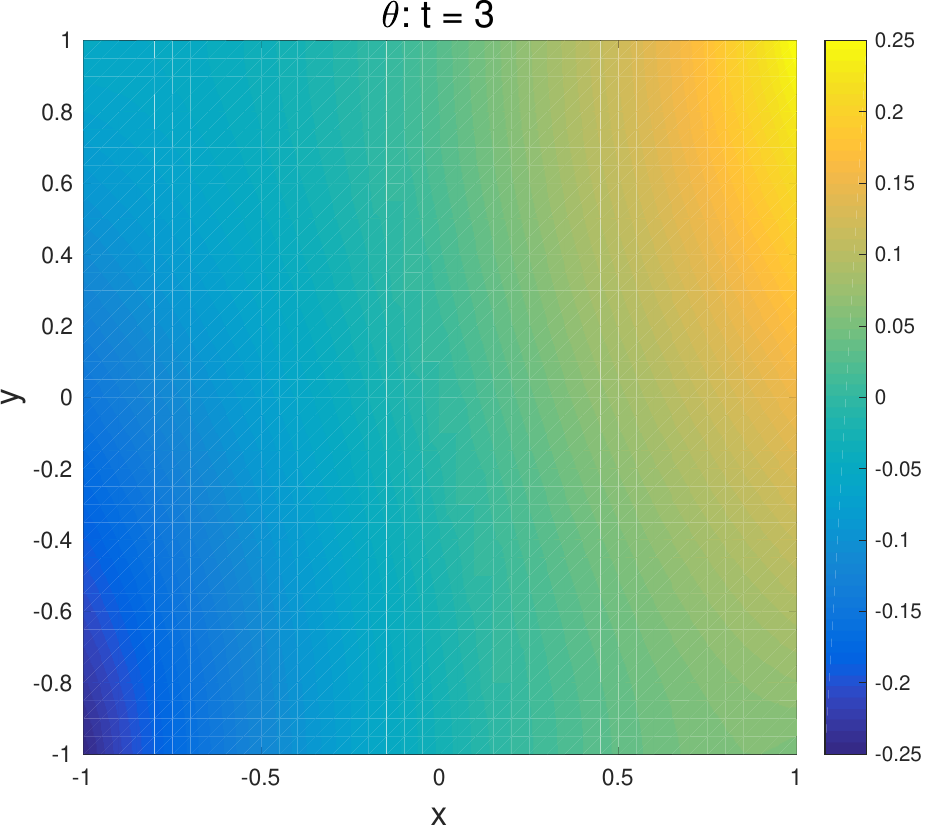}
	\caption{First-order semi-implicit scheme: $\theta = B(w)$ with $\epsilon = 10^{-5}$ and $h = 0.05, \tau = 0.001$.}
	\label{plot: semi_2d theta}
\end{figure}

\section{Conclusions}
In this paper, we study the semi-implicit strategy on the fast reaction--diffusion equations \eqref{eq:u}-\eqref{i.c.} and develop the semi-implicit schemes with first- and second-order accurate in time respectively. 
And our semi-implicit scheme preserves some analytical properties of the original model at the semi- and fully-discrete level. 
We also provide a series of numerical tests to demonstrate the properties, such as numerical convergence, non-negativity, and bound preserving, and to capture the movement of the sharp interface with various $\epsilon$ and to explore the heat transfer process, such as solid melting or liquid solidification.

\section*{Acknowledgements}
The work of Z. Zhou was supported by the National Key R \& D Program of China, Project Number 2020YFA0712000, and NSFC grant number 12031013, 12171013. 

\bibliographystyle{abbrv}
\bibliography{reference}

\begin{thebibliography}{10}

\bibitem{anguelov2001contributions}
R.~Anguelov and J.~M.-S. Lubuma.
\newblock Contributions to the mathematics of the nonstandard finite difference
  method and applications.
\newblock {\em Numerical Methods for Partial Differential Equations: An
  International Journal}, 17(5):518--543, 2001.

\bibitem{beckett2001moving}
G.~Beckett, J.~A. Mackenzie, and M.~L. Robertson.
\newblock A moving mesh finite element method for the solution of
  two-dimensional {S}tefan problems.
\newblock {\em Journal of Computational Physics}, 168(2):500--518, 2001.

\bibitem{boscarino2016high}
S.~Boscarino, F.~Filbet, and G.~Russo.
\newblock High order semi-implicit schemes for time dependent partial
  differential equations.
\newblock {\em Journal of Scientific Computing}, 68:975--1001, 2016.

\bibitem{caldwell2004numerical}
J.~Caldwell and Y.~Y. Kwan.
\newblock Numerical methods for one-dimensional {S}tefan problems.
\newblock {\em Communications in numerical methods in engineering},
  20(7):535--545, 2004.

\bibitem{chen1997simple}
S.~Chen, B.~Merriman, S.~Osher, and P.~Smereka.
\newblock A simple level set method for solving {S}tefan problems.
\newblock {\em Journal of Computational Physics}, 135(1):8--29, 1997.

\bibitem{cox2002exponential}
S.~M. Cox and P.~C. Matthews.
\newblock Exponential time differencing for stiff systems.
\newblock {\em Journal of Computational Physics}, 176(2):430--455, 2002.

\bibitem{date1992novel}
A.~W. Date.
\newblock Novel strongly implicit enthalpy formulation for multidimensional
  {S}tefan problems.
\newblock {\em Numerical Heat Transfer, Part B Fundamentals}, 21(2):231--251,
  1992.

\bibitem{diele2017numerical}
F.~Diele, M.~Garvie, and C.~Trenchea.
\newblock Numerical analysis of a first-order in time implicit-symplectic
  scheme for predator--prey systems.
\newblock {\em Computers \& Mathematics with Applications}, 74(5):948--961,
  2017.

\bibitem{esen2004numerical}
A.~Esen and S.~Kutluay.
\newblock A numerical solution of the {S}tefan problem with a {N}eumann-type
  boundary condition by enthalpy method.
\newblock {\em Applied Mathematics and Computation}, 148(2):321--329, 2004.

\bibitem{fernandez1979generalized}
J.~Fernandez-Diaz and W.~O. Williams.
\newblock A generalized {S}tefan condition.
\newblock {\em Zeitschrift f{\"u}r angewandte Mathematik und Physik ZAMP},
  30:749--755, 1979.

\bibitem{hilhorst2003vanishing}
D.~Hilhorst, M.~Mimura, and R.~Sch{\"a}tzle.
\newblock Vanishing latent heat limit in a {S}tefan-like problem arising in
  biology.
\newblock {\em Nonlinear analysis: real world applications}, 4(2):261--285,
  2003.

\bibitem{hilhorst1996fast}
D.~Hilhorst, R.~Van Der~Hout, and L.~A. Peletier.
\newblock The fast reaction limit for a reaction--diffusion system.
\newblock {\em Journal of mathematical analysis and applications},
  199(2):349--373, 1996.

\bibitem{hodgkin1952quantitative}
A.~L. Hodgkin and A.~F. Huxley.
\newblock A quantitative description of membrane current and its application to
  conduction and excitation in nerve.
\newblock {\em The Journal of physiology}, 117(4):500, 1952.

\bibitem{huang2019high}
D.~Z. Huang, P.-O. Persson, and M.~J. Zahr.
\newblock High-order, linearly stable, partitioned solvers for general
  multiphysics problems based on implicit--explicit {R}unge--{K}utta schemes.
\newblock {\em Computer Methods in Applied Mechanics and Engineering},
  346:674--706, 2019.

\bibitem{izhikevich2006fitzhugh}
E.~M. Izhikevich and R.~FitzHugh.
\newblock Fitzhugh-nagumo model.
\newblock {\em Scholarpedia}, 1(9):1349, 2006.

\bibitem{johnson2011original}
K.~A. Johnson and R.~S. Goody.
\newblock The original michaelis constant: translation of the 1913
  {M}ichaelis--{M}enten paper.
\newblock {\em Biochemistry}, 50(39):8264--8269, 2011.

\bibitem{kareiva1990population}
P.~Kareiva.
\newblock Population dynamics in spatially complex environments: theory and
  data.
\newblock {\em Philosophical Transactions of the Royal Society of London.
  Series B: Biological Sciences}, 330(1257):175--190, 1990.

\bibitem{kassam2005fourth}
A.-K. Kassam and L.~N. Trefethen.
\newblock Fourth-order time-stepping for stiff {PDE}s.
\newblock {\em SIAM Journal on Scientific Computing}, 26(4):1214--1233, 2005.

\bibitem{khaliq2009smoothing}
A.~Q.~M. Khaliq, J.~Martin-Vaquero, B.~A. Wade, and M.~Yousuf.
\newblock Smoothing schemes for reaction--diffusion systems with nonsmooth
  data.
\newblock {\em Journal of Computational and Applied Mathematics},
  223(1):374--386, 2009.

\bibitem{koto2008imex}
T.~Koto.
\newblock {IMEX} {R}unge--{K}utta schemes for reaction--diffusion equations.
\newblock {\em Journal of Computational and Applied Mathematics},
  215(1):182--195, 2008.

\bibitem{kutluay1997numerical}
S.~Kutluay, A.~R. Bahadir, and A.~{\"O}zde{\c{s}}.
\newblock The numerical solution of one-phase classical {S}tefan problem.
\newblock {\em Journal of computational and applied mathematics},
  81(1):135--144, 1997.

\bibitem{mackenzie2000numerical}
J.~A. Mackenzie and M.~L. Robertson.
\newblock The numerical solution of one-dimensional phase change problems using
  an adaptive moving mesh method.
\newblock {\em Journal of computational Physics}, 161(2):537--557, 2000.

\bibitem{madzvamuse2006time}
A.~Madzvamuse.
\newblock Time-stepping schemes for moving grid finite elements applied to
  reaction--diffusion systems on fixed and growing domains.
\newblock {\em Journal of computational physics}, 214(1):239--263, 2006.

\bibitem{madzvamuse2014fully}
A.~Madzvamuse and A.~H.~W. Chung.
\newblock Fully implicit time-stepping schemes and non-linear solvers for
  systems of reaction--diffusion equations.
\newblock {\em Applied Mathematics and Computation}, 244:361--374, 2014.

\bibitem{mcfadden1988mathematical}
R.~McFadden and C.~S. Kwok.
\newblock Mathematical model of simultaneous diffusion and binding of antitumor
  antibodies in multicellular human tumor spheroids.
\newblock {\em Cancer research}, 48(14):4032--4037, 1988.

\bibitem{meirmanov2011stefan}
A.~M. Meirmanov.
\newblock {\em The stefan problem}, volume~3.
\newblock Walter de Gruyter, 2011.

\bibitem{mickens2020nonstandard}
R.~E. Mickens.
\newblock {\em Nonstandard finite difference schemes: methodology and
  applications}.
\newblock World Scientific, 2020.

\bibitem{perthame2015parabolic}
B.~Perthame.
\newblock {\em Parabolic equations in biology}.
\newblock Springer, 2015.

\bibitem{stefan1871gleichgewicht}
J.~Stefan.
\newblock {\"U}ber das gleichgewicht und die bewegung, insbesondere die
  diffusion von gasgemengen.
\newblock {\em Sitzungsberichte der Mathematisch-Naturwissenschaftlichen Classe
  der Kaiserlichen Akademie der Wissenschaften Wien}, 63:63--124, 1871.

\bibitem{tong2012smoothed}
M.~Tong and D.~J. Browne.
\newblock Smoothed particle hydrodynamics modelling of the fluid flow and heat
  transfer in the weld pool during laser spot welding.
\newblock {\em IOP Conference Series: Materials Science and Engineering},
  27(1):012080, 2012.

\bibitem{tyson2013belousov}
J.~J. Tyson.
\newblock {\em The belousov-zhabotinskii reaction}, volume~10.
\newblock Springer Science \& Business Media, 2013.

\bibitem{wang2018third}
H.~Wang, Q.~Zhang, and C.-W. Shu.
\newblock Third order implicit--explicit {R}unge--{K}utta local discontinuous
  {G}alerkin methods with suitable boundary treatment for convection--diffusion
  problems with {D}irichlet boundary conditions.
\newblock {\em Journal of Computational and Applied Mathematics}, 342:164--179,
  2018.

\bibitem{wangersky1978lotka}
P.~J. Wangersky.
\newblock Lotka-{V}olterra population models.
\newblock {\em Annual Review of Ecology and Systematics}, 9(1):189--218, 1978.

\bibitem{zhang2024uncertainty}
Z.~Zhang, S.~Ma, and Z.~Zhou.
\newblock Uncertainty quantification of phase transition problems with an
  injection boundary.
\newblock {\em arXiv preprint arXiv:2402.02806}, 2024.

\end{thebibliography}

\end{document}